\numberwithin{equation}{section}
\newcommand{\fdd}{\overset{\textrm{f.d.d.}}\Rightarrow}
\newcommand{\R}{\mathbb R}
\newcommand{\abs}[1]{\left\vert#1\right\vert} 
\newcommand{\ind}{1\mkern-7mu1}
\newcommand\norm[1]{\left\lVert#1\right\rVert}
\DeclarePairedDelimiter\ceil{\lceil}{\rceil}
\theoremstyle{definition}
\newtheorem{defi}{Definition}[section]
\newtheorem{ex}[defi]{Example}
\newtheorem*{asum*}{Assumption}
\newtheorem{rem}[defi]{Remark}
\theoremstyle{plain}
\newtheorem{lem}[defi]{Lemma}
\newtheorem{theorem}[defi]{Theorem}
\newtheorem*{theorem*}{Theorem}
\begin{document}
\title{Limit theorems for integrated trawl processes with symmetric L\'{e}vy bases}
\author{Anna Talarczyk\thanks{Institute of Mathematics, University of Warsaw, ul. Banacha 2, 02-097 Warsaw, Poland,  \hbox{e-mail:}
annatal@mimuw.edu.pl. Research supported in part by  National Science Center, Poland, grant  2016/23/B/ST1/00492.} \ 
and \L ukasz Treszczotko\thanks{Institute of Mathematics, University of Warsaw, ul. Banacha 2, 02-097 Warsaw, Poland,  \hbox{e-mail:}
l.treszczotko@mimuw.edu.pl. Research supported in part by  National Science Center, Poland, grant  2017/25/N/ST1/00368.}}
\date{June 30th,  2019}
\maketitle
\begin{abstract}
We study  long time behavior of integrated trawl processes introduced by Barndorff-Nielsen. The trawl processes form a class of stationary infinitely divisible processes, described by an infinitely divisible random measure (L\'evy base) and a family of shifts of a fixed set (trawl). We assume that the L\'evy base is symmetric and homogeneous and that the trawl set is determined by the trawl function that decays slowly. Depending on the geometry of the trawl set and on the L\'evy measure corresponding to the L\'evy base we obtain various types of limits in law of the normalized integrated trawl processes for large times. The limit processes are always stable and self-similar with stationary increments. In some cases they have independent increments -- they are stable L\'evy processes where the index of stability depends on the parameters of the model. We show that  stable limits with stability index smaller than $2$ may appear even in cases when the underlying L\'evy base has all its moments finite. In other cases, the limit process has dependent increments and it may be considered as a new extension of fractional Brownian motion to the class of stable processes.
\end{abstract}

\medskip

{\bf Keywords:} trawl processes, L\'evy bases, stable processes, self-similar processes, L\'evy processes, limit theorems, fractional Brownian motion, infinite divisibility.
\\

\textup{2010} \textit{Mathematics Subject Classification}: \textup{Primary: 60G51, 60F17} \textup{Secondary: 60F05, 60G52, 60G18, 60G57}

\section{Introduction}\label{SEC_INTRO}

In this paper we investigate a class of stationary infinitely divisible processes. They have been introduced by Barndoff-Nielsen in \cite{BN2011} and studied further in \cite{BN2014} and \cite{GRAHOVAC}. Their discrete time counterparts were investigated in \cite{DOUKHAN}. Trawl processes are defined in the following way: suppose that $\Lambda$ is a homogenous  \emph{L\'{e}vy basis} on $\mathbb{R}^2$, that is, an infinitely divisible independently scattered random measure on $\mathbb{R}^2$, and let $A$ be a Borel subset of $\mathbb{R}^2$ with finite Lebesgue measure. Let $A_t$ denote $A$ shifted by the vector $(0,t)$, $A_t:= A + (t,0)$. A \emph{trawl process} is the process of the form 
\begin{equation}\label{X_def}
X_t = \Lambda(A_t), \quad t \in \mathbb{R}.
\end{equation}
The set $A$ is called a trawl. Since $\Lambda$ is homogeneous and infinitely divisible, the process $(X_t)_{t\geq 0}$ is stationary and infinitely divisible. To any L\'{e}vy basis there corresponds a L\'{e}vy process $L=(L(t))_{t\geq 0}$ (a process with stationary and independent increments). $L$ can be taken e.g. as $L(t):=\Lambda([0,t]\times [0,1])$. The process $L$ is called L\'{e}vy seed. The one-dimensional distributions of $X_t$ are determined by the choice of the L\'{e}vy seed and the dependence structure of a trawl process depends on the shape of the set $A$.

The processes of the form \eqref{X_def} are interesting mainly because they form a large class of processes that allows to model independently of each other the marginal distributions and the dependence structure. 

Typically,  the set $A$ is determined by a trawl function $g:[0,\infty) \rightarrow [0,\infty)$ with $\int_0^\infty g(s)ds <\infty$. More precisely we define
\[ A:=\{(x,y): x\leq 0, y \leq g(-x)\}\]
and then
\begin{equation}\label{A_t_def}
A_t =A+(t,0)= \{(x, y): x\leq t, 0 \leq y \leq g(t-x)\}.
\end{equation}

It seems quite clear that if $g$ vanishes sufficiently quickly, then the increments of $Y_T$ become asymptotically independent. 

A more interesting situation is when $g$ decays slowly, which will be the object of the current study. 
We will assume that the function $g$ is strictly decreasing, integrable and has a continuous derivative, that for large $t$ behaves as $const \times t^{-2-\gamma}$, for some $0<\gamma<1$. Typically one can think of $g$ of the form $C(1+t)^{-1-\gamma}$. It is known (see \cite{GRAHOVAC}) that if $g$ is regularly varying at infinity with index $-1-\gamma$, with $\gamma\in (0,1)$, then the corresponding trawl process is long range dependent. 

In the present paper we will investigate the behaviour of the integrated trawl process. More precisely, we study the convergence in law of the rescaled integrated trawl process
\begin{equation}\label{Y_def_primal}
Y_T(t)=\frac{1}{F_T}\int_0^{Tt}X_sds,
\end{equation}
as $T\rightarrow \infty$, where $F_T$ is an appropriate norming, chosen so that there exists a non-trivial limit in law.

Depending on the interplay between the type of decay of $g$ and the underlying L\'evy measure of the L\'evy base $\Lambda$ we show that the limit in law of \eqref{Y_def_primal} can be either a continuous stable process with dependent increments or a stable L\'evy process with index of stability depending on the parameters of the model.
 
\subsection{Background}\label{SEC_BACK}
Let us briefly describe the history of this problem and related results. \cite{GRAHOVAC} studied the behaviour of the integrated trawl process 
\begin{equation}\label{Y_non_scaled}
Y(t)=\int_0^t X_sds,
\end{equation}
with assumption on the trawl functions similar to ours, and in the case when the underlying L\'evy seed process has exponential moments.

It was shown that if one defines 
\begin{equation}
\tau_\ast(q):= \lim_{t\rightarrow \infty}\frac{\log \big(\mathbb{E}|Y(t)|^q\big)}{\log t},
\end{equation}
then there exists $q\geq 0$ such that for any $q^\ast \leq q$ one has $\tau_*(q)=q-\gamma$. This implies that for $q^\ast \leq p \leq q$
\[ \frac{\tau_*(p)}{p}\leq \frac{\tau_*(q)}{q}.\]
This property is known as \emph{intermittency}. In particular, intermittency implies that if the process $Y_T$ given by \eqref{Y_def_primal}
converges in the sense of finite dimensional distributions as $T\to \infty$  to some process $(Z_t)_{t\geq 0}$, then it is impossible to have convergence of all moments 
\[ \lim_{T\rightarrow \infty} \mathbb{E}\Big|\frac{Y_T(t)}{F_T}\Big|^q = \mathbb{E}|Z(t)|^q \]
for all $q>q^*$ and $t>0$. This follows form the fact that $Z$ would have to be self-similar with index $H$, i.e., $(Z(ct))_{t\geq 0} \overset{d}{=}c^H (Z(t))_{\geq 0}$ for all $c>0$, and $F_T$ of the form $F_T=T^HL(T)$ for some $H>0$ and a function $L$ which is slowly varying at $+\infty$, hence $\frac{\tau(q)}{q}$ would have to be constant. 
A natural question for us was to try to identify the limit process.
Indeed, as we shall see later, this corresponds to the situation of our Theorem \ref{THM2}, where the limit process of \eqref{Y_def_primal} is a stable process, with the stability parameter depending on the type of decay of the trawl function, even though $X_t$ has all moments finite.

\medskip

Another related paper is \cite{DOUKHAN}, where
discrete time trawl processes have been considered. They are of the form 
\[ X_k = \sum_{j=0}^\infty \gamma_{k-j}(a_j) \quad k \in \mathbb{Z},\]
where $\gamma_k = (\gamma_k(u))_{u\in \mathbb{R}}$ are i.i.d. copies of some process $\gamma = (\gamma(u))_{u\in \mathbb{R}}$ with $\gamma(u) \rightarrow 0$ in probability as $u\rightarrow 0$, and $a_j \in \mathbb{R}$, $j\in \mathbb{N}$, $\lim_{j\rightarrow \infty}a_j = 0$. $(X_k)$ is the trawl process corresponding to the seed process $\gamma$. In \cite{DOUKHAN} the behaviour of the process of partial sums
\[ S_n(t)=\frac{1}{F_n} \sum_{k=1}^{\ceil*{nt}} \Big(X(k) - \mathbb{E}X(k)\Big)\]
was investigated as $n \rightarrow \infty$ with an appropriate norming $F_n$. The authors considered the seed process with finite variance. Depending on the behaviour of the seed process and the trawl function $(a_j)$ various limits are obtained, either Gaussian limits: fractional Brownian motion and Brownian motion, or stable limits: $\alpha$-stable L\'{e}vy process. In particular, long memory trawl function $a_j \sim j^{-\alpha}$, $\alpha \in (1,2)$ and the standard Poisson seed process $\gamma$ leads to $\alpha$-stable L\'{e}vy process, even though with different norming the covariances converge to those of a fractional Brownian motion.

\subsection{Description of the results}\label{SEC_RES_DESC}
In this section we briefly describe our results. For precise statements of our theorems in their general form see Section \ref{SEC_RES}. We study the behaviour of the rescaled integrated trawl process $Y_T$ given by \eqref{Y_def_primal}. Our basic assumption is that $A_t$ is of the form \eqref{A_t_def} with the trawl function $g: [0,\infty) \rightarrow [0,\infty)$ which is integrable, strictly decreasing, has a continuous first derivative such that for large $t$ we have $g'(t)\sim -const \times t^{-2-\gamma}$ for some $0<\gamma<1$ (this corresponds to the assumption in \cite{DOUKHAN} that $a_j\sim j^{-1-\gamma}$ and to the assumptions made in \cite{GRAHOVAC}). In the latter paper the assumptions on $g$ were slightly less restrictive - $g'$ regularly varying at $+\infty$, but no limit in law theorems were established. 

We consider a homogeneous L\'{e}vy base $\Lambda$ such that for every $A \in \mathcal{B}(\mathbb{R}^2)$ (a Borel subset of $\R^2$) with finite Lebesgue measure, $\Lambda(A)$ is symmetric and does not have a Gaussian component, that is 
\begin{equation}\label{Lambda_char}
\mathbb{E}\exp(i \theta \Lambda(A)) =\exp\{-|A|\psi(\theta)\},
\end{equation}
where and $|A|$ is the Lebesgue measure of $A$, $\psi$ is the L\'evy exponent
\begin{equation}
\psi(\theta)=\int_\mathbb{R}\big(e^{i\theta y} - 1 -i\theta u\ind_{\{|y|<1\} }\big)\nu(dy),
\label{e:psi}
\end{equation}
and $\nu$ is a L\'{e}vy measure, i.e., a Borel measure on $\mathbb{R}$ satisfying
\begin{equation} \int_\mathbb{R} 1 \wedge |y|^2 \nu(dy)<\infty,
\label{e:nu}
\end{equation}
with $\nu(\{0\})=0$. We assume that $\nu$ is symmetric, hence \eqref{e:psi} can be written as
\begin{equation}\label{psi_def_intro}
\psi(\theta) = \int_\mathbb{R}(1-\cos(\theta y))\nu(dy), \quad \theta \in \mathbb{R}.
\end{equation}
The assumption of symmetry simplifies some parts of the proofs, but we expect that it is not essential and it should be possible to obtain analogous results in the non symmetric case.

Depending on the behaviour of the L\'{e}vy measure $\nu$, or equivalently, on the behaviour of the L\'{e}vy exponent $\psi$, we obtain several types of limits for $Y_T$. All the limits are of course self-similar with stationary increments. We observe a phase transition - depending on the parameters of the model, the limit process may be an $\alpha$-stable process with dependent increments ($\alpha$ depends on $\nu$) or a stable L\'{e}vy process with index of stability which may be either $1+\gamma$ or smaller, depending on $\nu$.

For example, consider the case when $\Lambda$ is the standard independently scattered symmetric $\alpha$ stable random measure with Lebesgue control measure (i.e, $\nu(dx)=\frac{const}{|x|^{1+\alpha}}dx$ and $\psi(x)=\abs{x}^\alpha$, $0<\alpha<2$). 
\begin{itemize}
 \item 
If $\alpha>1+\gamma$ then $F_T = T^{1-\gamma/\alpha}$ and for any $\tau>0$ the process $Y_T$ converges in law in $\mathcal{C}[0,\tau]$ to an $\alpha$-stable process with dependent increments, which is of the form constant times the process
\begin{equation}\label{new_proc_def}
Y(t) = \int_0^\infty  \int_0^\infty \big(r_+\wedge t - (r-u)_+\wedge t \big) u^{-\frac{2+\gamma}{\alpha}}M_\alpha(drdu),
\end{equation}
where $M_\alpha$ is a symmetric $\alpha$-stable random measure on $\mathbb{R}_+^2$ with Lebesgue control measure. The integral is understood in the sense of \cite{ST}. The process $Y$ is self-similar with self-similarity index $H=1-\frac \gamma \alpha$, it has stationary increments and it is $\alpha$-stable, hence it may be thought of as yet another extension of fractional Brownian motion. 
\item
If $0<\alpha<1+\gamma$, then with the norming $F_T=T^{1/\alpha}$ we have 
\begin{equation}\label{RES_alpha_stable}
Y_T \fdd K Z_\alpha,
\end{equation}
where $Z_\alpha$ is a symmetric $\alpha$-stable L\'{e}cy process and $K$ is some finite constant. ($\fdd$ stands for convergence of finite dimensional distributions.)
\item
In the critical case $\alpha=1+\gamma$ we also have convergence \eqref{RES_alpha_stable} but the larger norming $F_T=T^{\frac{1}{\alpha}}\log T$. The appearance of the logarithm term is typical for the critical cases in many models. 
\end{itemize}

Another simple example covered by our techniques is the following:
\begin{itemize}
 \item 
Suppose that  $\nu$ is a finite measure such that
\[ \int_\mathbb{R}|x|^\kappa\nu(dx)<\infty\]
for some $\kappa>1+\gamma$. For example, $\Lambda$ can be a difference of two homogeneous Poisson random measures on $\R^2$. In this case the norming is $F_T=T^{\frac{1}{1+\gamma}}$ and the limit process is an $(1+\gamma)$-stable L\'{e}vy process. Note that the latter result corresponds to the one obtained in \cite{DOUKHAN} in the discrete time setting.
\end{itemize}
In the next section we formulate our results in their general form. Depending on the interplay of the L\'evy measure $\nu$ and the trawl function $g$, in the limit we obtain either the process $Y$ given by \eqref{new_proc_def} or stable L\'evy processes.

\medskip
The paper is organised as follows: in Section 2 we recall some of the basic notions and we state the results. Section 3 contains the proofs. There we start with the general scheme, later applying it to prove our theorems.

\medskip
\textbf{Notation.} By $C, C_1, C_2,\ldots$ we denote generic positive constants, whose value is not important to us. These constants may be different in different formulas. To help the reader we often write $C_1, C_2,\ldots$ to indicate that the constant changes from line to line.\\
$\fdd$ denotes convergence of finite dimensional distributions.\\
$C([0,\tau])$ with $\tau>0$ stands for the space of continuous functions from $[0,\tau]$ to $\R$.

\section{Results}\label{SEC_RES}

We assume that $\nu$ is a symmetric L\'{e}vy measure on $\mathbb{R}$. That is, $\nu$ is symmetric and satisfies \eqref{e:nu}.
We consider a homogeneous L\'evy basis $\Lambda$ on $\mathbb{R}^2$ corresponding to $\nu$, that is a family $\big(\Lambda(A)\big)_{A\in \mathcal{E}}$ of real-valued random variables where $\mathcal{E}$ denotes the class of Borel subsets of $\mathbb{R}^2$ with finite Lebesgue measure. $\Lambda$ satisfies the following conditions:
\begin{enumerate}
\item
$\Lambda$ is an independently scattered random measure, i.e., for any $A_1,A_2,\ldots \in \mathcal{E}$ with $A_j \cap A_i = \emptyset$ if $i  \neq j$, $\Lambda(A_1), \ldots \Lambda(A_k)$ are independent and if additionally $\bigcup_{j=1}^\infty A_j \in \mathcal{E}$, then 
\[\Lambda\big(\bigcup_{j=1}^\infty A_j\big) = \sum_{j=1}^\infty \Lambda(A_j).\]
\item
For any $A \in \mathcal{E}$
\begin{equation}
\mathbb{E}\exp\big(i\theta\Lambda(A)) = \exp(-|A|\psi(\theta)\big), \quad \theta \in \mathbb{R},
\end{equation}
where $|A|$ denotes the Lebesgue measure of $A$ and $\psi$ is the L\'evy exponent corresponding to $\nu$:
\begin{equation}\label{psi_def}
\psi(\theta)=\int_\mathbb{R}(1-\cos(\theta u))\nu(du).
\end{equation}
$\psi$ has this simple form because we have assumed the symmetry of $\nu$. Also, in our setting there is no drift or diffusion part.
\end{enumerate}

Integrals of deterministic functions with respect to general L\'evy bases were defined and studied in \cite{RR1989}. In our simple case, if a measurable function $f: \mathbb{R}^2 \rightarrow \mathbb{R}$ satisfies
\begin{equation}\label{RES_loc_0}
\int_{\mathbb{R}^2} \int_\mathbb{R} \big(uf(x))^2\wedge 1\big)\nu(du)dx <\infty,
\end{equation}
then the integral $I(f)=\int_{\mathbb{R}^2}f(x)\Lambda(dx)$ is well defined and 
\begin{equation}\label{I(f)_char}
\mathbb{E}\exp(i\theta I(f)) = \exp\Big(-\int_{\mathbb{R}^2}\int_\mathbb{R} \big(1-\cos(\theta u f(x))\big)\nu(du)dx\Big)
\end{equation}
(see [RR] and \cite{LRDSS} Appendix B.1.5). 

In particular, if $\Lambda$ is a symmetric $\alpha$-stable 
random measure, denoted by $M_\alpha$, that is corresponding to,  $\psi(x)=\abs{x}^\alpha$, and $\nu(dx)=\frac {C_\alpha}{\abs {x}^\alpha}$, then $I(f)=\int_{\R^2} fdM_{\alpha}$ is the integral considered in \cite{ST}. In this case $I(f)$ is well defined if
\begin{equation}
 \label{e:2.5a}
 \int_{\R^2} \abs{f(x)}^\alpha dx<\infty
\end{equation}
and
\begin{equation}\label{e:2.5b}
\mathbb{E}\exp\Big(i\theta \int_{\R^2}fdM_\alpha\Big) = \exp\Big(-\int_{\mathbb{R}^2}
\abs{f(x)}^\alpha dx \Big).
\end{equation}

We consider the trawl process described in the introduction. Suppose that $g: [0,\infty)\rightarrow [0,\infty)$ is a continuous, integrable, strictly decreasing function. We define
\[ A:=\{(x,y): x\leq 0, y \leq g(-x)\},\]
\[A_t = \{(x, y): x\leq t, 0 \leq y \leq g(t-x)\}\]
and set 
\[X_t = \Lambda(A_t), \quad t \geq 0.\]
For $T\geq 1$ we put
\begin{equation}\label{Y_T_defi_again}
Y_T(t) = \frac{1}{F_T}\int_0^{Tt}X_sds, \quad t\geq 0,
\end{equation}
where $F_T$ is an appropriate norming, which will be specified later. Our basic assumption on the trawl function $g$ is the following.
\begin{asum*}[\textbf{G}]
Assume that the trawl function $g$ is continuous, integrable, strictly decreasing, continuously differentiable on $(0, \infty)$ and its derivative satisfies 
\begin{equation}\label{ASSUM_G_deriv}
\lim_{x\rightarrow \infty} x^{2+\gamma}|g'(x)| =  C_g,
\end{equation}
for some $\gamma \in (0,1)$ and $C_g>0$. 
\end{asum*}

\begin{ex}
The function $g(x)=\frac{C}{(1+x)^{1+\gamma}}$ satisfies Assumption (\textbf{G}). For this function the proofs can be somewhat simplified since $g$ satsifies additionally 
\[ \sup_{x>0} x^{2+\gamma}|g'(x)|\leq C_1.\]
\end{ex}


Now we are ready to state our main results.

\begin{theorem}\label{THM1}
Suppose that assumption (\textbf{G}) is satisfied and  that there exists $\alpha \in (0,2)$ and $C_\psi>0$ such that
\begin{equation}\label{ASSUM_A}
\lim_{\abs{x}\rightarrow \infty} \frac{\psi(x)}{|x|^\alpha} = C_\psi.
\end{equation}
Moreover, assume that  $\alpha>1+\gamma$ and there exists some $\kappa>1+\gamma$ such that
\begin{equation}\label{thm1_cond}
\int_{|y|\geq 1} |y|^\kappa \nu(dy) <\infty.
\end{equation}
Let $Y_T$ be given by \eqref{Y_T_defi_again} with
\begin{equation}\label{thm1_norming}
F_T = T^{\frac{\alpha -\gamma}{\alpha}}.
\end{equation}
Then, for any $\tau > 0$, the processes $Y_T$ converge in law in $\mathcal{C}([0,\tau])$, as $T\rightarrow \infty$, to the process $KY$, where $Y$ is defined by \eqref{new_proc_def} and $K$ is a positive constant.
\end{theorem}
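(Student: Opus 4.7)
The plan is to establish convergence of finite-dimensional distributions through a direct characteristic-function computation, and then obtain tightness in $\mathcal{C}([0,\tau])$ via a Kolmogorov moment estimate. First, by stochastic Fubini applied to $X_s=\Lambda(A_s)$,
\[
\int_0^{Tt}X_s\,ds \;=\; \int_{\R^2} f_{Tt}(x,y)\,\Lambda(dx\,dy), \qquad f_T(x,y):=\int_0^T \ind_{A_s}(x,y)\,ds.
\]
For $0<y<g(0)$ one has $(x,y)\in A_s$ iff $s\in[x,\,x+g^{-1}(y)]$, so $f_T(x,y)=|[x,x+g^{-1}(y)]\cap[0,T]|$. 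The change of variables $r:=x+g^{-1}(y)$, $u:=g^{-1}(y)$, under which $dx\,dy=|g'(u)|\,dr\,du$ with $u>0$, together with \eqref{I(f)_char}, yields
\[
\E\exp\bigl(i\theta Y_T(t)\bigr) \;=\; \exp\!\left(-\int_0^{\infty}\!\!\int_{\R}\psi\!\left(\frac{\theta\,k_{Tt}(r,u)}{F_T}\right)|g'(u)|\,dr\,du\right),
\]
where $k_T(r,u):=r_+\wedge T-(r-u)_+\wedge T$ is exactly the kernel appearing in \eqref{new_proc_def}.

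Next, I would rescale $r=Tr'$, $u=Tu'$. Homogeneity of $k_t$ gives $k_{Tt}(Tr',Tu')=T\,k_t(r',u')$, so with $F_T=T^{(\alpha-\gamma)/\alpha}$ the argument of $\psi$ equals $\theta\,T^{\gamma/\alpha}k_t(r',u')$. Combining with $|g'(Tu')|\sim C_g T^{-2-\gamma}u'^{-2-\gamma}$ from Assumption~(\textbf{G}) and $dr\,du=T^2\,dr'\,du'$, the powers of $T$ cancel exactly. By \eqref{ASSUM_A} the integrand converges pointwise to $C_\psi C_g\,|\theta|^\alpha|k_t(r',u')|^\alpha u'^{-2-\gamma}$. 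A direct computation shows $\int_\R|k_t(r',u')|^\alpha dr'$ is of order $t\,u'^\alpha$ for small $u'$ and of order $t^\alpha u'$ for large $u'$, so the limiting integral $\int_0^\infty\!\int_\R|k_t|^\alpha u'^{-2-\gamma}\,dr'\,du'$ is finite precisely because $\alpha>1+\gamma$ (and because $\gamma>0$ controls the tail). Matching with the characteristic function of $KY(t)$ read off from \eqref{new_proc_def} identifies $K^\alpha=C_\psi C_g$, and running the same scheme for linear combinations $\sum_j\theta_j k_{Tt_j}$ upgrades to convergence of all finite-dimensional distributions.

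The main technical obstacle is producing a $T$-uniform integrable majorant for the scaled integrand so as to apply dominated convergence. I would split the $(r',u')$-region into: a bounded portion away from $u'=0$, where $|g'(Tu')|T^{2+\gamma}$ tends to $C_g u'^{-2-\gamma}$ uniformly by (\textbf{G}) and $\psi$ is controlled by \eqref{ASSUM_A}; a neighbourhood of $u'=0$, where $|k_t(r',u')|\leq u'$ tames the $u'^{-2-\gamma}$ singularity precisely because $\alpha>1+\gamma$; and a tail region, where $|k_t|\leq t$ combined with $u'^{-2-\gamma}$ and $\gamma>0$ gives integrability. Elementary estimates on $\psi$ (via \eqref{e:nu} near the origin and \eqref{ASSUM_A} at infinity), together with the moment hypothesis \eqref{thm1_cond}, supply the remaining bounds. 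Finally, tightness in $\mathcal{C}([0,\tau])$ follows from the Kolmogorov criterion: \eqref{thm1_cond} yields $\E|X_0|^\kappa<\infty$ for some $\kappa>1+\gamma$, and because $\alpha>1+\gamma$ forces $(1+\gamma)(\alpha-\gamma)>\alpha$, the interval $(\alpha/(\alpha-\gamma),\min(\alpha,\kappa))$ is non-empty, so one can pick $p$ therein and derive a uniform bound $\E|Y_T(t)-Y_T(s)|^p\leq C|t-s|^{p(\alpha-\gamma)/\alpha}$ with exponent strictly greater than $1$.
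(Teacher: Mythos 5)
Your finite-dimensional-distribution argument is essentially the paper's: the stochastic Fubini representation, the characteristic-function formula \eqref{Y_T_char_func}, the scaling $r=Tr'$, $u=Tu'$, pointwise convergence of the integrand to $C_\psi C_g|\theta|^\alpha|k_t(r',u')|^\alpha (u')^{-2-\gamma}$, and dominated convergence after a region splitting; your integrability check of the limit kernel and the identification $K^\alpha=C_\psi C_g$ are correct. One caveat: \eqref{ASSUM_A} does not by itself give a $T$-uniform integrable majorant, because globally one only has $\psi(x)\le C(|x|^\alpha+|x|^\kappa)$ --- the large-jump part of $\nu$ contributes a term of order $|x|^\kappa$ near $x=0$ (with $\kappa<\alpha$), which dominates $|x|^\alpha$ there. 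The paper resolves this by splitting $\nu$ into its restrictions to $\{|x|<1\}$ and $\{|x|\ge1\}$: the $|x|^\kappa$ contribution then carries a prefactor $(T/F_T)^{\kappa-\alpha}\to0$ and is shown to \emph{vanish} rather than to be dominated. Your appeal to \eqref{e:nu} and \eqref{thm1_cond} points at the right ingredients, so I count this part as the same proof in outline.

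The genuine gap is in tightness. You assert a uniform-in-$T$ bound $\E|Y_T(t)-Y_T(s)|^p\le C|t-s|^{p(\alpha-\gamma)/\alpha}$ but give no mechanism for proving it: the only route from $\E|X_0|^\kappa<\infty$ that does not go back through the characteristic function is Minkowski's inequality, which yields $\E|Y_T(t)|^p\le (Tt/F_T)^p\,\E|X_0|^p=T^{\gamma p/\alpha}t^p$ and blows up in $T$. If one does carry out the characteristic-function computation (via $\E|\xi|^p=c_p\int_0^\infty\theta^{-1-p}(1-\Re\,\E e^{i\theta\xi})\,d\theta$ and the bound $\psi\le C(|x|^\alpha+|x|^\kappa)$), the large-jump component contributes a term of order $|t-s|^{p(\kappa-\gamma)/\kappa}$ in addition to $|t-s|^{p(\alpha-\gamma)/\alpha}$, so to get an exponent exceeding $1$ you must also require $p>\kappa/(\kappa-\gamma)$; this is a strictly stronger constraint than your $p>\alpha/(\alpha-\gamma)$ (the map $x\mapsto x/(x-\gamma)$ is decreasing and $\kappa<\alpha$), although the corrected interval $(\kappa/(\kappa-\gamma),\kappa)$ is still non-empty because $\kappa>1+\gamma$. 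The paper avoids moments altogether: it splits $Y_T=Y_{T,1}+Y_{T,2}$ according to the small/large jumps, proves $1-\E\exp(i\theta Y_{T,i}(t))\le C|\theta|^{\beta_i}t^{1+\epsilon}$ with $\beta_1=\alpha$, $\beta_2=\kappa$ directly from \eqref{Y_T_char_func}, and converts this into the tail estimate $\mathbb{P}(|Y_{T,i}(t)|\ge\lambda)\le C\lambda^{-\beta_i}t^{1+\epsilon}$ required by Billingsley's criterion for processes with stationary increments. Your plan can be repaired along these lines, but as written the key uniform estimate is unproved and its exponent is not the one that the computation actually delivers.
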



\begin{rem}\label{rem_1_3}
Whether or not condition \eqref{ASSUM_A} holds depends only on the behaviour of the L\'evy measure $\nu$ near $0$ since for any $\epsilon>0$ the function 
\[ u \mapsto \int_{|x|>\epsilon}(1-\cos(ux))\nu(dx)\]
is bounded. If near zero $\nu$ has a density $h(x)$ such that 
\begin{equation}\label{zero_density_1}
\lim_{x\rightarrow 0}|x|^{1+\alpha}h(x) = C
\end{equation}
for some finite positive $C$ and $\alpha \in (0,2)$, then \eqref{ASSUM_A} is satisfied.
\end{rem}

\begin{rem}
For $\alpha=2$ (i.e. when $\Lambda$ is a homogeneous Gaussian random measure) one can prove a result similar to the one of  Theorem \ref{THM1}. In this case the limit process  turns out to be fractional Brownian motion with Hurst coefficient $1-\gamma/2$. Therefore, we may think of our limit process $Y$ as a yet another extension of fractional Brownian motion to the realm of stable processes.
\end{rem}

\begin{rem} We have written a basic code to simulate the process $Y$. We include a picture of sample paths obtained. The interested reader may look up the Python code on the GitHub repository \footnote{\url{https://github.com/lukasz-treszczotko/trawl_processes_limits}}.
\end{rem}

\begin{figure}
\textbf{Simulated sample path trajectories for various pairs of $\alpha$ and $\gamma$}\par\medskip
\begin{tabular}{p{0.5\textwidth} p{0.5\textwidth}}
  \vspace{0pt} \includegraphics[scale=0.3]{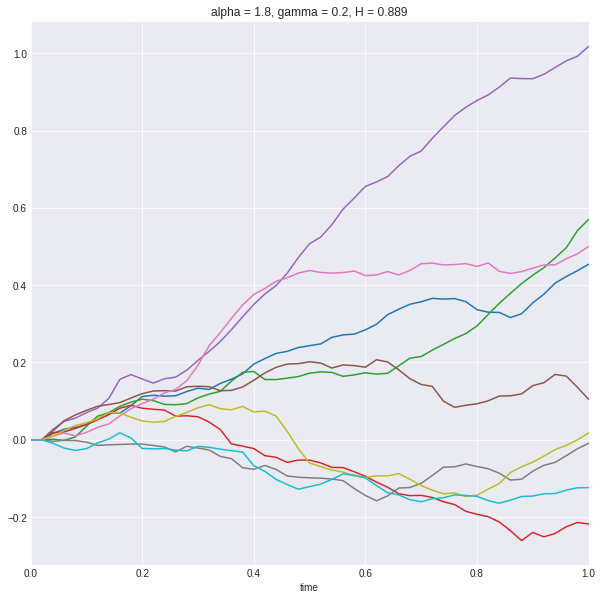} 
  \vspace{0pt} \includegraphics[scale=0.3]{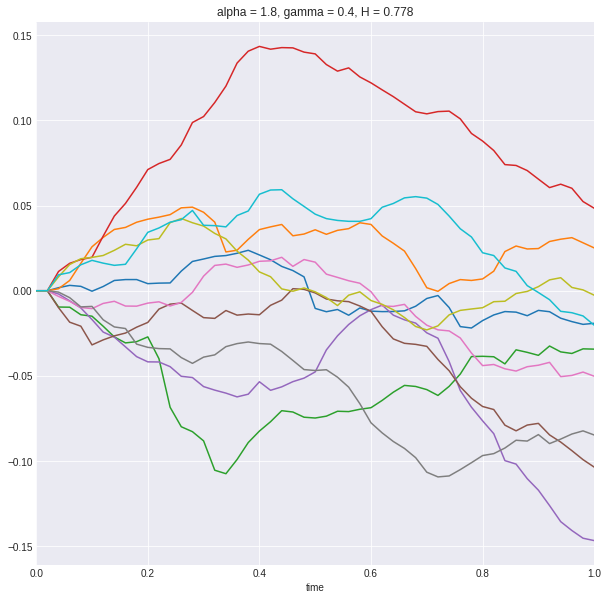} &
  \vspace{0pt} \includegraphics[scale=0.3]{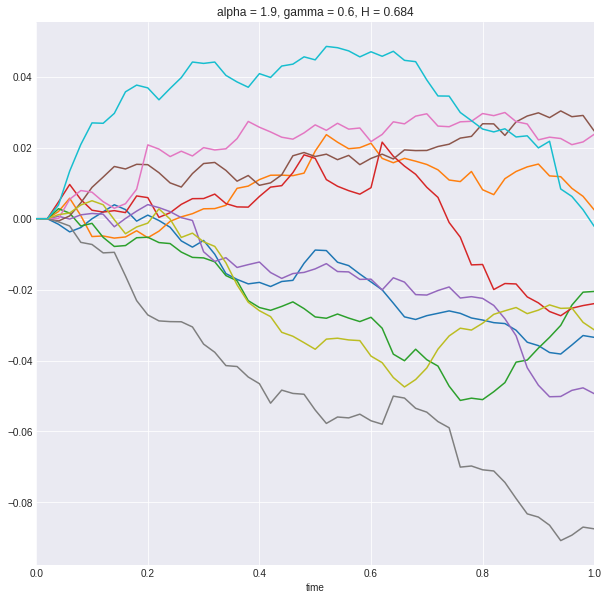} 
  \vspace{0pt} \includegraphics[scale=0.3]{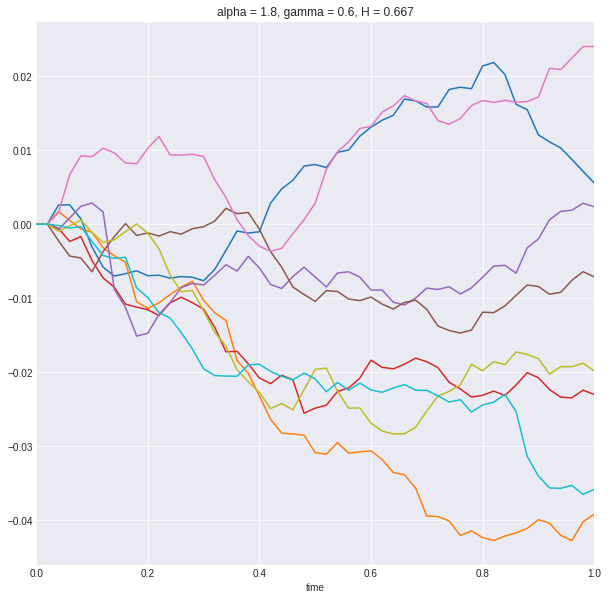} 
\end{tabular}
\end{figure}

\begin{theorem}\label{THM2}
Assume (\textbf{G}) and either
\begin{itemize}
\item[(i)]
\begin{equation}\label{thm2_cond1}
\psi(u)\leq C|u|^\kappa \wedge|u|^\alpha, \quad u \in \mathbb{R}
\end{equation}
for some $2 \geq \kappa > 1+\gamma$, $0 \leq \alpha <1+\gamma$ and finite constant $C>0$, or
\item[(ii)] suppose that $\psi$ is nondecreasing on $[0,\infty)$,
\begin{equation}\label{thm2_cond2a}
\int_\mathbb{R}\psi(u)|u|^{-2-\gamma}du <\infty,
\end{equation}
and 
\begin{equation}\label{thm2_cond2b}
\sup_{u \geq 0}u^{2+\gamma}|g'(u)| \leq C
\end{equation}
for some finite constant $C>0$. 
\end{itemize}
Set 
\begin{equation}\label{thm2_norming}
F_T = T^{\frac{1}{1+\gamma}}.
\end{equation}
Then for $Y_T$ given by \eqref{Y_T_defi_again} we have
\begin{equation*}
 Y_T\fdd K\xi^{(1+\gamma)}, \qquad \text{as}\ T\to \infty,
\end{equation*}
where $\xi^{(1+\gamma)}$ denotes a symmetric $(1+\gamma)$-stable L\'evy process and $K$ is a positive constant.
\end{theorem}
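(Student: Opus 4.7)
Since the putative limit is a symmetric $(1+\gamma)$-stable L\'evy process, it is enough to prove convergence of characteristic exponents on arbitrary linear combinations $S_T := \sum_{j=1}^{n}\theta_j Y_T(t_j)$, for $0 = t_0 < t_1 < \ldots < t_n$. A stochastic Fubini applied to \eqref{Y_T_defi_again} gives $\int_0^{Tt}X_s\,ds = I(f_{T,t})$ with $f_{T,t}(x,y) := \int_0^{Tt}\ind_{A_s}(x,y)\,ds$, and parameterising the trawl by height via $y = g(u)$ yields explicitly $f_{T,t}(x,g(u)) = h_{T,t}(x,u) := [\min(Tt, x+u) - \max(0,x)]_+$. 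Combining with \eqref{I(f)_char} and $dy = |g'(u)|\,du$,
\begin{equation*}
\E\, e^{i S_T} = \exp\Bigl(-\int_{\R}\!\!\int_0^\infty \psi\Bigl(\tfrac{1}{F_T}\textstyle\sum_{j}\theta_j h_{T,t_j}(x,u)\Bigr)\,|g'(u)|\,du\,dx\Bigr).
\end{equation*}
The target is to show that this exponent converges to $K^{1+\gamma}\sum_{k=1}^n|\tilde\theta_k|^{1+\gamma}(t_k-t_{k-1})$, with $\tilde\theta_k := \sum_{j\geq k}\theta_j$; that is the characteristic exponent of $K\xi^{(1+\gamma)}$ evaluated at $(t_1,\ldots,t_n)$.

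\textbf{Pointwise limit.} Substitute $x = Tr$ and $v = u/F_T$, so $u = T^{1/(1+\gamma)}v$. Under Assumption (\textbf{G}) the measure becomes $|g'(u)|\,du\,dx = T F_T|g'(F_Tv)|\,dv\,dr \to C_g v^{-2-\gamma}\,dv\,dr$, since $F_T^{1+\gamma} = T$ makes the powers of $T$ cancel. For $v > 0$ and $r\in(t_{k-1},t_k)$ one verifies directly from the explicit formula for $h$ that $h_{T,t_j}(Tr,F_Tv)/F_T \to v\cdot\ind_{\{j\geq k\}}$, while for $r<0$ the ratio vanishes (as $Tr/F_T\to -\infty$). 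Hence the integrand converges pointwise to $\psi(\tilde\theta_{k(r)}v)\cdot C_g v^{-2-\gamma}$ on $(0,t_n)\times(0,\infty)$. Integrating in $r$ and rescaling $w = \tilde\theta_k v$ (using that $\psi$ is even) gives the desired limit with $K^{1+\gamma} = C_g\int_0^\infty\psi(w)w^{-2-\gamma}\,dw$; this is finite under (ii) by \eqref{thm2_cond2a}, and under (i) by the sandwich $\psi(w)\leq C(|w|^\kappa\wedge|w|^\alpha)$ with $\kappa > 1+\gamma > \alpha$.

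\textbf{Main obstacle.} The technical heart of the proof is the uniform-in-$T$ dominating bound needed to justify the interchange of limit and integral, together with the control of boundary regions in $r$ (namely $r<0$ and $r$ within $O(u/T)$ of some $t_j$), where $h_{T,t_j}(Tr,u)$ is not pointwise $u$ but an intermediate value in $[0,u]$. In case (i), the polynomial sandwich on $\psi$ and the asymptotic $|g'(u)|\leq C u^{-2-\gamma}$ for large $u$ (small-$u$ contributions being handled separately by continuity of $g'$) produce an integrable majorant of the form $C(v^\kappa\wedge v^\alpha)v^{-2-\gamma}$ after rescaling. In case (ii), the monotonicity of $\psi$ together with the pointwise bound $h_{T,t_j}(Tr,u)\leq u$ allows the integrand to be dominated by $\psi(Cu/F_T)$, and the global bound \eqref{thm2_cond2b} converts the rescaled integral into exactly the one appearing in \eqref{thm2_cond2a}. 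Boundary contributions live on $r$-sets of length $O(u/T)$ and are absorbed with an extra $T^{-\gamma}$ factor; they vanish in the limit. Once dominated convergence is in place, L\'evy's continuity theorem yields the claimed $Y_T\fdd K\xi^{(1+\gamma)}$.
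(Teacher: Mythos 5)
Your route is the same as the paper's: pass to the characteristic exponent of $\sum_j\theta_jY_T(t_j)$, substitute $x=Tr$, $u=F_Tv$ so that Assumption (\textbf{G}) turns the measure into $C_gv^{-2-\gamma}\,dv\,dr$, identify the pointwise limit $\psi\big(\tilde\theta_{k(r)}v\big)C_gv^{-2-\gamma}$ (the paper's $a(r)$ is your $\tilde\theta_{k(r)}$), and obtain $K^{1+\gamma}=C_g\int_0^\infty\psi(w)w^{-2-\gamma}\,dw$ after justifying the interchange of limit and integral by splitting the $v$-range into small, middle and large parts. Your treatment of the small- and middle-$v$ regions, and of the pointwise limit, matches the paper's.

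There is, however, a genuine gap in the domination step on the region $v>T/F_T$ (original $u>T$), in both cases. There the integrand is supported on $0\le r\le t_n+vF_T/T$, so integrating in $r$ attaches a factor $(t_n+vF_T/T)$ to your majorant; in case (i) the resulting tail bound is of order $\frac{F_T}{T}\int_{T/F_T}^\infty v^{\alpha-1-\gamma}\,dv$, which diverges whenever $\alpha\ge\gamma$ (allowed, since only $\alpha<1+\gamma$ is assumed), and in case (ii) the analogous quantity $\int^\infty\psi(Cv)v^{-1-\gamma}\,dv$ is not implied finite by \eqref{thm2_cond2a}. The ``extra $T^{-\gamma}$ factor'' you invoke cannot rescue a divergent integral. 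The missing ingredient is the saturation bound visible in your own explicit formula for $h$: $h_{T,t_j}(x,u)\le Tt_j$, so for $v\ge T/F_T$ the argument of $\psi$ is at most $\norm{a}_\infty t_n T/F_T$ \emph{uniformly in $v$} (this is \eqref{e:3.46a} in the paper). In case (i) this gives $\psi(\cdot)\le C(T/F_T)^\alpha$ and a tail of order $(T/F_T)^{\alpha-1-\gamma}\to0$ since $\alpha<1+\gamma$. In case (ii) monotonicity gives $\psi(\cdot)\le\psi(CT/F_T)$, and one still needs the non-obvious fact that $\psi(S)S^{-1-\gamma}\to0$ as $S\to\infty$, which the paper extracts from \eqref{thm2_cond2a} via $\frac1{1+\gamma}\psi(S)S^{-1-\gamma}\le\int_S^\infty\psi(x)x^{-2-\gamma}\,dx$. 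Without these two steps the large-$u$ tail is not controlled, so you should add them to complete the argument.
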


\begin{rem}\label{thm2_REM1}
\begin{itemize}
\item[(a)]
Note that \eqref{thm2_cond1} implies \eqref{thm2_cond2a}. Condition \eqref{thm2_cond2a} is slightly weaker, but in order to prove convergence under this assumption, we need to assume something more about the trawl function $g$.
\item[(b)]
If 
\begin{equation}\label{thm2_rem_1}
\int_{\{|x|<1\}}|x|^\alpha \nu(dx) < \infty
\end{equation}
and \eqref{thm1_cond} is satsified for $2 \geq \kappa > 1+\gamma > \alpha \geq 0$, then using $|1-\cos(x)| \leq 2 \wedge x^2 \leq 2|x|^\delta$ for any $0 \leq \delta \leq 2$ we obtain
\begin{eqnarray*}
\psi(u)& =& \int_{\{|x|<1\}} (1-\cos(ux))\nu(dx)+ \int_{\{|x|\geq 1\}} (1-\cos(ux))\nu(dx) \\
& \leq & 2|u|^\alpha \int_{\{|x|<1\}}|x|^\alpha \nu(dx) + 2 \nu(\{|x|\geq 1\}) \wedge \Big(|u|^\kappa \int_{\{|x|\geq 1\}}|x|^\kappa\nu(dx)\Big),
\end{eqnarray*}
hence \eqref{thm2_cond1} holds. In particular, if $\nu$ is a finite measure and satisfies \eqref{thm1_cond}, then \eqref{thm2_cond1} holds. Similarly as in Remark \ref{rem_1_3}, if near zero $\nu$ has density satisfying \eqref{zero_density_1} and \eqref{thm1_cond} holds , then \eqref{thm2_cond1} is satisfied.
\end{itemize}
\end{rem}

As a direct consequence of Theorem \ref{THM2} we obtain the following result.

\begin{ex}\label{col_2_9}
If $\Lambda = N^{(1)} - N^{(2)}$, where  $N^{(1)}$ and $N^{(2)}$ are two independent  Poisson random measures on $\R^2$ with Lebesgue intensity measure,  then the processes $Y_T$ converge in the sense of finite-dimensional distributions to a symmetric $(1+\gamma)$--stable L\'evy process multiplied by a constant. In this case $\nu = \lambda(\delta_1 + \delta_{-1})$ for some $\lambda>0$ and $\psi(x)=2\lambda(1-\cos(x))$. This result is a symmetrized continuous time analogue of the discrete time result of \cite{DOUKHAN}.
\end{ex}

\begin{theorem}\label{THM3}
Assume that (\textbf{G}) is satisfied and that there exist $0<\alpha < 1+\gamma$ and a finite constant $C_\alpha>0$ such that
\begin{equation}\lim_{x\rightarrow 0} \frac{\psi(x)}{|x|^\alpha} = C_\alpha.
 \label{e:2.16a}
\end{equation}
Furthermore, assume that there exist $C>0$ and $0\le {\kappa}<1+\gamma$ such that 
\begin{equation} \psi(u) \leq C(1 \vee |u|^{{\kappa}}), \quad u \in \mathbb{R}.
\label{e:2.18a}
\end{equation}
 Let $Y_T$ be defined by \eqref{Y_T_defi_again} with
\begin{equation}\label{thm3_norming}
F_T = T^{\frac{1}{\alpha}}.
\end{equation}
Then
\begin{equation*}
 Y_T\fdd K\xi^{(\alpha)}, \qquad \text{as}\ T\to \infty,
\end{equation*}
where $K$ is a positive constant and $\xi^{(\alpha)}$ is a symmetric $\alpha$-stable L\'evy process.
%
\end{theorem}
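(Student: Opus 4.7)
The plan is to establish the convergence of finite-dimensional distributions by computing characteristic functions of arbitrary linear combinations. Fix $n\ge 1$, $0=t_0<t_1<\cdots<t_n$ and $\theta_1,\ldots,\theta_n\in\R$, and consider $S_T := \sum_{j=1}^n\theta_j Y_T(t_j)$. By Fubini, $F_T S_T = \int_{\R^2} h_T(x,y)\,\Lambda(dx\,dy)$ where $h_T(x,y) := \sum_{j=1}^n \theta_j\,\phi_{Tt_j}(x,y)$ and $\phi_s(x,y):=\int_0^s \ind_{A_r}(x,y)\,dr$. Using the geometry of $A_r$, namely that $(x,y)\in A_r$ iff $0\le y\le g(0)$ and $x\le r\le x+g^{-1}(y)$, one gets $\phi_s(x,y)=|[0,s]\cap [x,x+g^{-1}(y)]|$ for $0\le y\le g(0)$. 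By \eqref{I(f)_char}, $\E e^{iS_T}=\exp(-I_T)$ with
\[
I_T := \int_{\R^2}\psi\bigl(T^{-1/\alpha}h_T(x,y)\bigr)\,dx\,dy,
\]
and it remains to prove
\[
I_T \longrightarrow C_\alpha \Bigl(\int_0^\infty u^\alpha|g'(u)|\,du\Bigr)\sum_{k=1}^n (t_k-t_{k-1})\Bigl|\sum_{j\ge k}\theta_j\Bigr|^\alpha,
\]
which is the log-characteristic exponent of $\sum_j\theta_j K\xi^{(\alpha)}(t_j)$ with $K^\alpha = C_\alpha \int_0^\infty u^\alpha|g'(u)|du$.

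After the substitution $x=Tx'$ the integral becomes $I_T = T\int_{\R^2}\psi(T^{-1/\alpha}h_T(Tx',y))\,dx'dy$. For $x'\in(0,t_n)\setminus\{t_1,\ldots,t_n\}$ and $0<y<g(0)$, once $T$ is large enough, $\phi_{Tt_k}(Tx',y)=g^{-1}(y)$ when $t_k>x'$ (the interval $[Tx',Tx'+g^{-1}(y)]$ is contained in $[0,Tt_k]$) and equals $0$ when $t_k\le x'$; hence $h_T(Tx',y)\to g^{-1}(y)\,\Theta(x')$, with $\Theta(x'):=\sum_{k:\,t_k>x'}\theta_k$. Combined with \eqref{e:2.16a},
\[
T\psi\bigl(T^{-1/\alpha}h_T(Tx',y)\bigr)\longrightarrow C_\alpha\bigl(g^{-1}(y)\bigr)^\alpha|\Theta(x')|^\alpha \ind_{\{0<x'<t_n\}}.
\]
The substitution $y=g(u)$ turns the $y$-integral into $\int_0^\infty u^\alpha|g'(u)|du$, finite exactly because $\alpha<1+\gamma$ and (\textbf{G}); the $x'$-integral yields the telescoped sum $\sum_k(t_k-t_{k-1})|\sum_{j\ge k}\theta_j|^\alpha$, identifying the limit.

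The main technical obstacle is justifying the passage to the limit. From \eqref{e:2.16a} together with the boundedness of $\psi$ on compacts, one has $\psi(u)\le C|u|^\alpha$ for $|u|\le 1$, and from \eqref{e:2.18a}, $\psi(u)\le C|u|^\kappa$ for $|u|\ge 1$. Split the integration region by the size of $T^{-1/\alpha}h_T(Tx',y)$. On $\{T^{-1/\alpha}h_T\le 1\}$, the integrand is bounded by $Ch_T(Tx',y)^\alpha$; using $|h_T(Tx',y)|\le (\sum_j|\theta_j|)g^{-1}(y)$ and the fact that $h_T(Tx',\cdot)$ is supported in $x'\in[-g^{-1}(y)/T,t_n]$, this is dominated by $C(g^{-1}(y))^\alpha\ind_{\{x'\in[-\eta,t_n]\}}$ on the bulk, while the boundary $x'\in[-g^{-1}(y)/T,-\eta]$ is controlled by a direct computation since there $\phi_{Tt_j}(Tx',y)=Tx'+g^{-1}(y)$ is independent of $j$. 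On $\{T^{-1/\alpha}h_T>1\}$, the integrand is at most $CT^{1-\kappa/\alpha}h_T^\kappa\le CT^{1-\kappa/\alpha}(g^{-1}(y))^\kappa$; this region forces $g^{-1}(y)\gtrsim T^{1/\alpha}$, i.e.\ $y\lesssim g(cT^{1/\alpha})\asymp T^{-(1+\gamma)/\alpha}$ by (\textbf{G}). Using $|g'(u)|\lesssim u^{-2-\gamma}$, this contribution is of order $T^{1-(1+\gamma)/\alpha}$, which tends to $0$ precisely because $\alpha<1+\gamma$, and where $\kappa<1+\gamma$ is needed to ensure $\int_{cT^{1/\alpha}}^\infty u^{\kappa-2-\gamma}du<\infty$. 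The simultaneous use of $\alpha<1+\gamma$ and $\kappa<1+\gamma$ is the heart of this tail estimate, and is the place where the hypotheses of the theorem are most essential.
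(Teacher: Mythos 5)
Your overall strategy --- prove convergence of $I_T=\int_{\R^2}\psi(T^{-1/\alpha}h_T)\,dx\,dy$ by dominated convergence, splitting the domain according to whether the argument of $\psi$ exceeds $1$ and using $\psi(v)\le C|v|^\alpha$ for $|v|\le 1$ (from \eqref{e:2.16a} and continuity of $\psi$) together with $\psi(v)\le C|v|^\kappa$ for $|v|\ge 1$ (from \eqref{e:2.18a}) --- is viable and genuinely different from the paper's. The paper instead first decomposes $\Lambda$ into its small-jump and large-jump parts: the small-jump exponent satisfies $\psi_1(v)\le C(|v|^2\wedge|v|^\kappa)$, so that part is disposed of by invoking Theorem \ref{THM2} with the smaller norming $T^{1/(1+\gamma)}$ (this is where \eqref{e:2.18a} enters for the paper); after that reduction $\psi$ is bounded, $\psi(v)\le C(|v|^\alpha\wedge 1)$, and every domination becomes a one-line estimate. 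Your identification of the limit, $C_\alpha\bigl(\int_0^\infty u^\alpha|g'(u)|\,du\bigr)\int_0^\infty|a(r)|^\alpha dr$ with $a=\sum_j\theta_j\ind_{[0,t_j]}$, is correct, as are the pointwise limit and the bulk domination on $\{T^{-1/\alpha}|h_T|\le1\}$, $x'\in[-\eta,t_n]$.

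There is, however, a gap in your tail estimate on $\{T^{-1/\alpha}|h_T|>1\}$. You bound the integrand by $CT^{1-\kappa/\alpha}(g^{-1}(y))^\kappa$ and conclude the contribution is $O(T^{1-(1+\gamma)/\alpha})$; that conclusion presumes the $x'$-integration runs over a set of length $O(1)$. But $h_T(Tx',y)$ is supported on $x'\in[-g^{-1}(y)/T,\,t_n]$, and on the region in question $g^{-1}(y)\ge cT^{1/\alpha}$ is unbounded, so the left strip has length $g^{-1}(y)/T$, which is not $O(1)$. Integrating your bound over that strip gives $CT^{-\kappa/\alpha}\int_{cT^{1/\alpha}}^\infty u^{\kappa+1}|g'(u)|\,du$, and since $u^{\kappa+1}|g'(u)|\sim C_g u^{\kappa-1-\gamma}$ this integral \emph{diverges} whenever $\kappa\ge\gamma$ --- a case permitted by the hypotheses. (The same strip is only waved at in your bulk case as well.) The estimate is repairable: by Lemma \ref{f_lem}(i) one has the sharper bound $|h_T(Tx',y)|\le C\,(g^{-1}(y)\wedge Tt_n)$, and redoing the computation with this cap the strip contributes at most a constant times $T^{\kappa(1-1/\alpha)-\gamma}$ (or better when $\alpha<1$), whose exponent is negative because $\kappa<1+\gamma$ and $\alpha<1+\gamma$ imply $\kappa(\alpha-1)/\alpha<\gamma$. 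So the theorem does follow along your lines, but the negative-$x$ strip needs this extra ingredient; the paper's preliminary reduction to a bounded exponent avoids the issue entirely, since there the strip is dominated by $C\int_0^\infty u\bigl((u/F_T)^\alpha\wedge1\bigr)|g'(u)|\,du\to0$.
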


Let us now see how these general theorems work in the case of symmetric $\alpha$-stable random measures.

\begin{ex}\label{alpha_stable_example}
Suppose that $\Lambda$ is a homogeneous and symmetric $\alpha$-stable random measure on $\mathbb{R}^2$  with $\alpha \in (0,2)$. We also assume that (\textbf{G}) is staisfied. This case corresponds to 
\[ \psi(x) = |x|^\alpha, \quad  x \in \mathbb{R}\]
and 
\[ \nu(dx) = \frac{c_\alpha}{|x|^{\alpha+1}}, \quad x \in \mathbb{R}.\]
\begin{itemize}
\item 
If $\alpha>1+\gamma$, then \eqref{thm1_cond} holds 
for any $1+\gamma<\kappa <\alpha$, hence, the assumptions of Theorem \ref{THM1} are satisfied, and with the norming $F_T = T^{1-\gamma/\alpha}$, for any $\tau>0$, the process $Y_T$ converges in law in $\mathcal{C}([0,\tau])$ to the process $KY$, where $K$ is some finite constant and $Y$ is given by \eqref{new_proc_def}.
\item
If $\alpha<1+\gamma$, then the assumptions of Theorem \ref{THM3} are satisfied and with the normalization $F_T=T^{1/\alpha}$, the process $Y_T$ converges in the sense of finite-dimensional distributions to symmetric $\alpha$-stable L\'evy process multiplied by a constant.
\end{itemize}
\end{ex}

In the next theorem we will discuss the critical case $\alpha=1+\gamma$.

\begin{theorem}\label{THM_CRITICAL}
Assume that $\Lambda$ is a  symmetric $\alpha$-stable random measure on $\mathbb{R}^2$. Also, suppose that (\textbf{G}) is satisfied and $\alpha = 1+\gamma$. Let $Y_T$ be defined by \eqref{Y_T_defi_again} with
\[ F_T = T^{1/\alpha}\log(T).\] 
Then
\begin{equation*}
 Y_T\fdd K\xi^{(\alpha)}, \quad \text{as}\ T\to \infty,
\end{equation*}
%
where
$K$ is some finite positive constant and $\xi^{(\alpha)}$ is a symmetric $\alpha$-stable L\'evy process. 
\end{theorem}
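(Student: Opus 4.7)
The plan is to establish convergence of the joint characteristic functions, exploiting the exact stable scaling of $M_\alpha$. By stochastic Fubini,
$$Y_T(t) = F_T^{-1}\int_{\R^2}\ell_{Tt}(x,y)\,M_\alpha(dx\,dy), \qquad \ell_{Tt}(x,y) := \bigl|[x,\,x+g^{-1}(y)]\cap[0,Tt]\bigr|$$
for $0<y<g(0)$. Invoking \eqref{e:2.5b}, the log-characteristic function of $(Y_T(t_1),\ldots,Y_T(t_n))$ evaluated at $(\theta_1,\ldots,\theta_n)$ is $-F_T^{-\alpha} J_T$, where
$$J_T := \int_{\R^2}\Big|\sum_{k=1}^n \theta_k\,\ell_{Tt_k}(x,y)\Big|^\alpha\,dx\,dy.$$
Since $K\xi^{(\alpha)}$ has log-characteristic function $-K^\alpha\|\phi\|_\alpha^\alpha$ at the same argument with $\phi(u):=\sum_k\theta_k\UNO_{[0,t_k]}(u)$, it suffices to show $F_T^{-\alpha}J_T\to K^\alpha\|\phi\|_\alpha^\alpha$.

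I would then rescale by $x=Tu$ and $y=g(r)$ (so $dy = -|g'(r)|\,dr$), noting the identity $\ell_{Tt_k}(Tu,g(r)) = T\,\eta_{t_k}(u;r/T)$ with $\eta_t(u;s):=|[u,u+s]\cap[0,t]|$. This yields
$$J_T = T^{1+\alpha}\int_0^{\infty}|g'(r)|\,H_T(r)\,dr,\qquad H_T(r):=\int_{\R}\Big|\sum_{k}\theta_k\,\eta_{t_k}(u;r/T)\Big|^\alpha\,du.$$
The key algebraic identity $\sum_k\theta_k\eta_{t_k}(u;s) = \int_u^{u+s}\phi(v)\,dv$, combined with the $L^\alpha$ Lebesgue differentiation theorem, yields $s^{-\alpha}\int_{\R}|\int_u^{u+s}\phi|^\alpha\,du\to\|\phi\|_\alpha^\alpha$ as $s\downarrow 0$. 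Hence $H_T(r)\sim(r/T)^\alpha\|\phi\|_\alpha^\alpha$ whenever $r/T\to 0$.

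Substituting this bulk asymptotic into the range $r\in[1,\delta T]$ gives the dominant contribution $T\|\phi\|_\alpha^\alpha\int_1^{\delta T} r^\alpha|g'(r)|\,dr$. Under (\textbf{G}), and using $\alpha=1+\gamma$, the integrand behaves as $C_g r^{\alpha-2-\gamma}=C_g r^{-1}$ at infinity, so the integral is $\sim C_g\log T$ and thus $J_T^{\mathrm{main}}\sim C_g\|\phi\|_\alpha^\alpha\,T\log T$. Together with the stated norming $F_T$, this identifies $K$. For the remainder I would use the crude envelopes $H_T(r)\le C(r/T)^\alpha$ for $r\le t_n T$ (from $\eta_{t_k}(u;s)\le s$ on a set of $u$-length $s+t_n$) and $H_T(r)\le C\,r/T$ for $r\ge t_n T$ (from $\eta\le t_n$), together with $|g'(r)|\le Cr^{-2-\gamma}$ from (\textbf{G}); a direct power count then shows that the contributions from $r\in[0,1]$, $r\in[\delta T,t_n T]$, and $r\ge t_n T$ are each $O(T)=o(T\log T)$.

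The main obstacle is to upgrade the pointwise Lebesgue-differentiation asymptotic for $H_T(r)$ into a statement uniform enough in $r$ to be integrated against $|g'(r)|$ over the whole range $r\in[1,\delta T]$: one needs a dominated convergence argument controlled by the envelope $H_T(r)\le C(r/T)^\alpha$, so that the $(\log T)$-divergence can be extracted cleanly and the remaining pieces drop out. Since the theorem asserts only $\fdd$-convergence, no tightness argument is required.
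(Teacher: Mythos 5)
Your route is essentially the paper's: both start from the exact stable form of the characteristic functional, change variables so that the trawl variable carries the weight $u^\alpha|g'(u)|$, observe that under (\textbf{G}) and $\alpha=1+\gamma$ this behaves like $C_g u^{-1}$ and hence produces the logarithmic divergence, and then kill the boundary ranges by crude envelopes (your three remainder estimates all check out). The one step you flag as the "main obstacle" --- turning the pointwise asymptotic $H_T(r)\sim (r/T)^\alpha\|\phi\|_\alpha^\alpha$ into the clean extraction of $\log T$ --- is exactly what the paper resolves, by the logarithmic substitution $u'=\log u/\log T$ (equivalently $r=T^v$): after that change of variables the main term becomes an integral over an interval converging to $[0,1]$ of a quantity converging pointwise to $\|\phi\|_\alpha^\alpha$ and dominated by your envelope, so ordinary dominated convergence finishes it. So the gap you name is fillable by precisely the mechanism you gesture at, once the logarithmic variable is introduced. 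A minor difference in your favour: you work with general $g$ satisfying (\textbf{G}), whereas the paper's displayed computation silently specializes to $g(u)=(1+u)^{-1-\gamma}$.

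There is, however, one concrete problem with your last step. You correctly obtain $J_T\sim C_g\|\phi\|_\alpha^\alpha\,T\log T$, but then assert that "together with the stated norming $F_T$, this identifies $K$". With the norming as literally stated, $F_T=T^{1/\alpha}\log T$, one has $F_T^\alpha=T(\log T)^\alpha$, so $F_T^{-\alpha}J_T\sim C_g\|\phi\|_\alpha^\alpha(\log T)^{1-\alpha}\to 0$ (recall $\alpha=1+\gamma>1$), i.e.\ your computation yields a degenerate limit $K=0$, contradicting the theorem's claim that $K>0$. The norming consistent with your (correct) asymptotics is $F_T=(T\log T)^{1/\alpha}$; this is in fact what the paper's own proof uses, since it writes $TF_T^{-\alpha}=1/\log T$, so the displayed $F_T$ in the theorem appears to be a misprint. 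Still, you should have noticed that your own estimate of $J_T$ is incompatible with a nondegenerate limit under the stated $F_T$, and either corrected the norming or concluded $Y_T\fdd 0$.
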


Thus, in the case of $\alpha$-stable random measures we have a phase transition: for large $\alpha$ ($\alpha>1+\gamma$) the limit process has dependent increments, while for small $\alpha$ ($\alpha<1+\gamma$) the limit process has independent increments. In the critical case ($\alpha=1+\gamma$) the limit process also has independent increments but the norming differs by a logarithmic factor. This type of phase transition and existence of two regimes - one in which the limit process has independent increments and  another one in which the increments are  dependent, along with the logarithmic factor in the norming in the critical case is a typical behavior, also observed in other models. See for example 
\cite{BOJTALIV} and \cite{functlim4} for a model with behaviour of this type, related to occupation time processes of branching particle systems.

\section{Proofs}

\subsection{General scheme}

In all the proofs we show convergence of finite-dimensional distributions by proving convergence of the corresponding characteristic functions. In Theorem \ref{THM1} we additionally show tightness in $\mathcal{C}([0,\tau])$ for all $\tau>0$. 
We start with some general calculations used in all the cases. 

First we write 
 the process $Y_T$ in a different form, given by the lemma below.

\begin{lem}\label{lem:3.1a}
Let $Y_T$ be given by \eqref{Y_T_defi_again}. Then
\begin{equation}\label{Y_repr_eq}
Y_T(t) = \frac{1}{F_T} \int_{\mathbb{R}^2}\left( \left(x+g^{-1}(y)\right)_+ \wedge (Tt) - x_+ \wedge (Tt)\right)\ind_{\{0 \leq y \leq g(0)\}}\Lambda(dx,dy)
\end{equation}
\end{lem}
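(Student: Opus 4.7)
The plan is to establish \eqref{Y_repr_eq} by first expressing each $X_s$ itself as a stochastic integral against $\Lambda$, then exchanging the deterministic $ds$ integral with the stochastic integral via a Fubini-type theorem, and finally evaluating the inner deterministic integral in closed form.

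Since $g:[0,\infty)\to(0,g(0)]$ is continuous, strictly decreasing, and integrable (so $g(u)\to 0$ as $u\to\infty$), it admits a continuous strictly decreasing inverse $g^{-1}:(0,g(0)]\to[0,\infty)$. Using this inverse I would reparametrise the trawl set as
\[
A_s = \{(x,y):\,0<y\le g(0),\ x\le s\le x+g^{-1}(y)\},
\]
so that $\ind_{A_s}(x,y)=\ind\{x\le s\le x+g^{-1}(y)\}\ind\{0<y\le g(0)\}$ and $X_s=\int_{\R^2}\ind_{A_s}(x,y)\Lambda(dx,dy)$. Plugging this into $\int_0^{Tt}X_s\,ds$ and invoking stochastic Fubini would give
\[
\int_0^{Tt} X_s\,ds = \int_{\R^2}\Big(\int_0^{Tt}\ind\{x\le s\le x+g^{-1}(y)\}\,ds\Big)\ind\{0<y\le g(0)\}\,\Lambda(dx,dy).
\]
The inner deterministic integral is just the Lebesgue measure of $[0,Tt]\cap[x,x+g^{-1}(y)]$, and a short case analysis (splitting on the signs of $x$ and $x+g^{-1}(y)-Tt$) yields $(x+g^{-1}(y))_+\wedge(Tt)-x_+\wedge(Tt)$. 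Dividing by $F_T$ gives precisely \eqref{Y_repr_eq}.

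The main obstacle is justifying the stochastic Fubini step in the infinitely divisible setting of \cite{RR1989}. For this I would verify condition \eqref{RES_loc_0} for the integrand $f(x,y):=\bigl((x+g^{-1}(y))_+\wedge(Tt)-x_+\wedge(Tt)\bigr)\ind\{0<y\le g(0)\}$ appearing on the right-hand side of \eqref{Y_repr_eq}. This $f$ is bounded by $Tt$ and vanishes outside $\{(x,y):\,0<y\le g(0),\,-g^{-1}(y)\le x\le Tt\}$, a set whose Lebesgue measure equals $Tt\,g(0)+\int_0^{g(0)}g^{-1}(y)\,dy=Tt\,g(0)+\int_0^\infty g(u)\,du<\infty$ by the layer-cake formula and integrability of $g$. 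Combined with the Lévy measure condition \eqref{e:nu}, this bounds $\int_{\R^2}\int_\R\bigl((uf(x,y))^2\wedge 1\bigr)\nu(du)\,dx\,dy$ and hence yields \eqref{RES_loc_0}. To then conclude the Fubini identity, one would first verify it for simple step functions in $s$ and pass to the limit using the characteristic function representation \eqref{I(f)_char} together with dominated convergence, exploiting the joint boundedness and compact support above. Once this routine integrability bookkeeping is in place, the identity \eqref{Y_repr_eq} follows immediately from the computation described above.
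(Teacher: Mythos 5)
Your reduction of the trawl set to $\{(x,y):\,0<y\le g(0),\ x\le s\le x+g^{-1}(y)\}$, the closed-form evaluation of the inner integral as $(x+g^{-1}(y))_+\wedge(Tt)-x_+\wedge(Tt)$, and the check that the integrand on the right-hand side of \eqref{Y_repr_eq} is bounded with support of finite Lebesgue measure (hence satisfies \eqref{RES_loc_0}) all match the paper and are correct. The gap is in the Fubini step itself. Your plan --- verify the identity for step functions in $s$ and pass to the limit ``using the characteristic function representation together with dominated convergence'' --- only controls the right-hand side: applied to $I(f_n-f)$ it gives $I(f_n)\to I(f)$ in probability (convergence of characteristic functions alone would only identify the law of the limit, not the random variable). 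It says nothing about why the step-function sums $\sum_i X_{s_i}\Delta_i$ converge to $\int_0^{Tt}X_s\,ds$ on the left-hand side. That convergence requires either almost sure Riemann integrability of the paths $s\mapsto X_s$ or a moment bound, and neither is free here: the only standing assumption on $\nu$ is \eqref{e:nu}, so $\int_{\{|y|>1\}}|y|\,\nu(dy)$ may be infinite and $X_s$ need not even have a first moment. This is precisely where the interchange ceases to be ``routine integrability bookkeeping''.

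The paper closes this gap differently: it invokes the stochastic Fubini theorem for L\'evy bases (Theorem 3.1 of \cite{BNConnor2011}), which applies directly when $\int_\R|y|\wedge|y|^2\,\nu(dy)<\infty$, and reduces the general case to it via the decomposition $\Lambda=\Lambda_1+\Lambda_2$ of \eqref{Lambda_decomposition}: $\Lambda_1$ carries the jumps of size less than $1$ and satisfies the theorem's hypotheses, while $\Lambda_2$ is of compound Poisson type, and since only finitely many of its atoms lie in $A_0\cup([0,Tt]\times[0,g(0)])$, the interchange for $\Lambda_2$ is an elementary pathwise Fubini for a finite sum. If you wish to keep your approximation scheme, you should carry out this same small-jump/large-jump splitting first, so that the left-hand side limit can be taken in $L^2$ (or $L^1$) for the small-jump part and pathwise for the large-jump part.
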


\begin{proof}
It is immediate to see that
\begin{eqnarray}
\int_0^{t}\ind_{A_s}(x, y)ds &=& \int_0^t\ind_{\{x \leq s \leq g^{-1}(y)+x\}}dx \, \ind_{\{0 \leq y \leq g(0)\}}\nonumber \\
&=&\Big(\big(g^{-1}(y)+x\big)_+\wedge t - x_+ \wedge t\Big)\ind_{\{0 \leq y \leq g(0)\}}.
\end{eqnarray}
Hence \eqref{Y_repr_eq} follows from the Fubini theorem for L\'evy bases (see Theorem 3.1 in \cite{BNConnor2011}). Note that this theorem can be applied directly in the case $\int_\mathbb{R}|y|\wedge |y|^2 \nu(dy)<\infty$. If we do not assume $\int_{\{|y|>1\}}|y| \nu(dy)<\infty$, then we can decompose 
\begin{equation}\label{Lambda_decomposition}
\Lambda = \Lambda_1 + \Lambda_2,
\end{equation}
where $\Lambda_1$ and $\Lambda_2$ are independent L\'evy bases corresponding to L\'evy measures $\nu_1$ and $\nu_2$, respectively, where 
\begin{eqnarray}\label{nu_decomposition}
\nu_1(B) & = & \nu(B\cap \{x: |x|<1\}), \\
\nu_2(B) & = & \nu(B\cap \{x: |x|\geq 1\}) 
\end{eqnarray}
for $B$ a Borel set in $\mathbb{R}$. Then $\Lambda_1$ satisfies the assumptions of Theorem 3.1 in \cite{BNConnor2011} and $\Lambda_2$ can be written as
\[\Lambda_2 = \sum_i \eta_i \delta_{(x_i, y_i)},\]
where $(x_i, y_i)$ are points of a Poisson random measure on $\mathbb{R}^2$ with 
Lebesgue intensity measure, multiplied by $\nu_2(\mathbb{R}^2)$
and $\eta_i$ are i.i.d. random variables with law $\nu_2(\cdot)/\nu_2(\mathbb{R}^2)$, independent of the Poisson random measure. The trawl function is non-decreasing and integrable, thus
\[\sup_{0 \leq s \leq Tt}\ind_{A_s}(x,y) \leq \ind_{A_0 \cup [0, Tt]\times [0,g(0)] }(x,y).\]
Only a finite number of points $(x_i, y_i)$ of the Poisson random measure belong to 
$A_0 \cup [0,g(0)] \times [0, Tt]$, hence we can exchange the order of integration with respect to $ds$ and $\Lambda_2$  as well and \eqref{Y_repr_eq} follows.  
\end{proof}

Note that in some of the proofs it will be convenient to use the decomposition \eqref{Lambda_decomposition}
of $\Lambda$. Then
\begin{equation}
 \label{e:3.5a}
 Y_T=Y_{T,1}+Y_{T,2}, 
\end{equation}
where 
 $Y_{T,1}$ and $Y_{T,2}$, are independent processes of the form \eqref{Y_T_defi_again}, corresponding to $\Lambda_1$ and $\Lambda_2$, 
 respectively. We also denote the corresponding characteristic exponents by
\begin{align}
\psi_1(\theta) =& \int_{\{|x|<1\}}(1-\cos(\theta x))\nu(dx),\quad \theta\in \R \label{e:3.5b}\\
\psi_2(\theta) =& \int_{\{|x|\geq 1\}}(1-\cos(\theta x))\nu(dx), \quad \theta \in \mathbb{R}.\label{psi_decomposition}
\end{align}

As the next step we write the characteristic function of $Y_T$. We need some additional notation.  Denote 
\begin{equation}\label{f_def}
f(t,r,u):= r_+ \wedge t - (r-u)_+ \wedge t = \int_0^t \ind_{[r-u,r]}(s)ds \quad t,r,u \geq 0.
\end{equation}

We have the following lemma describing the characteristic function of finite-dimensional distributions of $Y_T$.

\begin{lem}\label{Y_T_char_lemma}
Fix $T>0$, $a_1,\ldots,a_n \in \mathbb{R}$, $0\leq t_1\le \ldots \le t_k<+\infty$ and denote
\begin{equation}\label{h_T_def}
h_T(r, u) = \sum_{j=1}^n a_j f(Tt_j,r,u), \qquad  r,u\ge 0 .
\end{equation}  
Then, 
For $Y_T$ defined by \eqref{Y_T_defi_again} we have
\begin{equation}\label{Y_T_char_func}
\mathbb{E}\exp \bigg(i\sum_{j=1}^k a_j Y_T(t_j)\bigg) = \exp\bigg(-\int_{\mathbb{R}_+^2}\psi\Big(\frac{1}{F_T}h_T(r,u)\Big)\abs{g'(u)}drdu\bigg),
\end{equation}
where $\psi$ is the L\'evy exponent \eqref{psi_def}.
%
\end{lem}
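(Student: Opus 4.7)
The plan is to combine Lemma \ref{lem:3.1a} with the characteristic function formula \eqref{I(f)_char} and then perform a change of variables that turns the integration over the graph of $g$ into integration over $\mathbb{R}_+^2$ with weight $\abs{g'(u)}$.

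First I would use Lemma \ref{lem:3.1a} and the linearity of the integral with respect to $\Lambda$ to write
\begin{equation*}
\sum_{j=1}^k a_j Y_T(t_j) = \frac{1}{F_T}\int_{\mathbb{R}^2} H_T(x,y)\,\Lambda(dx,dy),
\end{equation*}
where
\begin{equation*}
H_T(x,y) = \sum_{j=1}^k a_j\bigl[(x+g^{-1}(y))_+ \wedge (Tt_j) - x_+ \wedge (Tt_j)\bigr]\ind_{\{0\le y\le g(0)\}}.
\end{equation*}
An application of \eqref{I(f)_char} with $\theta=1$ and $f=H_T/F_T$ (which I would check satisfies the integrability condition \eqref{RES_loc_0} along the same lines as in the proof of Lemma \ref{lem:3.1a}, since for each fixed $y$ the support of $H_T(\cdot,y)$ in $x$ is contained in $[-g^{-1}(y),Tt_k]$, and the resulting bounds are integrable thanks to $\int_0^\infty g(u)du<\infty$ and $\int_\R (1\wedge u^2)\nu(du)<\infty$) yields
\begin{equation*}
\E\exp\Big(i\sum_{j=1}^k a_j Y_T(t_j)\Big) = \exp\Big(-\int_{\R^2} \psi\bigl(H_T(x,y)/F_T\bigr)\,dx\,dy\Big).
\end{equation*}

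Next I would apply the change of variables $y=g(u)$, so that $u=g^{-1}(y)\ge 0$ and $dy = -g'(u)du = \abs{g'(u)}du$, followed by the translation $r=x+u$ with $dr=dx$. Under this substitution,
\begin{equation*}
(x+g^{-1}(y))_+\wedge(Tt_j) - x_+\wedge(Tt_j) = r_+\wedge(Tt_j) - (r-u)_+\wedge(Tt_j) = f(Tt_j,r,u),
\end{equation*}
so $H_T(x,y)$ becomes exactly $h_T(r,u)$. The original $(x,y)$-domain $\R\times[0,g(0)]$ transforms into $\R\times[0,\infty)$; however, for $r\le 0$ and $u\ge 0$ one has $r_+=0=(r-u)_+$, hence $f(Tt_j,r,u)=0$ and thus $h_T(r,u)=0$. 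Since $\psi(0)=0$, the contribution from $r\le 0$ vanishes, and we may restrict integration to $\mathbb{R}_+^2$, obtaining \eqref{Y_T_char_func}.

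The one point requiring some care is the justification that $H_T/F_T$ is $\Lambda$-integrable in the sense of \eqref{RES_loc_0}; this is the main obstacle, but only a minor one, and can be handled exactly as in Lemma \ref{lem:3.1a} by splitting $\Lambda=\Lambda_1+\Lambda_2$ according to the decomposition \eqref{Lambda_decomposition} if $\nu$ has no finite first moment outside a neighbourhood of zero. Once the integrability and the characteristic function formula \eqref{I(f)_char} are in place, the proof reduces to the clean two-step change of variables described above.
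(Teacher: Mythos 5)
Your proposal is correct and follows essentially the same route as the paper: Lemma \ref{lem:3.1a} plus the characteristic function formula \eqref{I(f)_char}, followed by the substitution $u=g^{-1}(y)$, $r=x+u$ (producing the Jacobian $\abs{g'(u)}$) and the observation that the integrand vanishes for $r\le 0$. Your extra attention to the integrability condition \eqref{RES_loc_0} is a reasonable bit of added care but does not change the argument.
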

\proof
By Lemma \ref{lem:3.1a}, \eqref{I(f)_char} and \eqref{psi_def} we have
\begin{multline*}
 \mathbb{E}\exp \bigg(i\sum_{j=1}^k a_j Y_T(t_j)\bigg) 
\\= \exp\left\{ -\int_{\R^2} \psi\left(\frac 1{F_T}\sum_{j=1}^n a_j \left[ (g^{-1}(y)+x)\wedge (Tt_j)-x_+\wedge (Tt_j)\right]\ind_{0<y<g(0)}\right)dxdy \right\}
\end{multline*}
Next we  substitute $u=g^{-1}(y)$ and $r=x+g^{-1}(y)$. We also observe that   if $r\le 0$ we have $\left(r_+\wedge (t_jT)-(r-u)_+\wedge (t_jT)\right))\ind_{\{u>0\}}=0$. Hence \eqref{Y_T_char_func} follows.\qed


The formula \eqref{Y_T_char_func} will be our starting point of the proofs of convergence of finite dimensional distributions in 
 Theorems \ref{THM1}, \ref{THM2} and \ref{THM3}. We will show that the right-hand side of \eqref{Y_T_char_func} converges to 
\[ \mathbb{E}\exp\Big(i\sum_{j=1}^n a_j \widetilde{Y}(t_j)\Big),\]
where $\widetilde{Y}$ is the corresponding limit process. 

This will amount to proving convergence of the term in the exponent on the right hand side of \eqref{Y_T_char_func}, which we denote by $I(T)$.
\begin{equation}
 \label{e:I_T}
 I(T)=\int_{\R^2_+}\psi\left(\frac 1{F_T}h_T(r,u)\right)\abs{g'(u)}du.
\end{equation}

\subsection{Auxiliary estimates and identities}
\medskip
We will frequently use the following simple facts concerning $f$ and $h_T$

\begin{lem}\label{f_lem}
Let $f$ be given by \eqref{f_def} and $h_T$ as in Lemma \ref{lem:3.1a}.  Then
\begin{itemize}
\item[(i)]
\begin{alignat}{2}
  0 \leq  f(t,r,u) &\leq t \wedge u \wedge r &\qquad &r,u,t\ge 0,\label{f_estimates}\\
  f(t,r,u) &=0   &\qquad &\textrm {for}\ t\ge 0 \ \textrm{and }\ r>t+u,\label{e:3.11a}\\
|h_T(r,u)| & \leq  \left(\sum_{j=1}^n |a_j|\right) f(Tt_n, r, u), &\qquad &r,u \geq 0.\label{e:hT_estimate}
\end{alignat}


\item[(ii)]
If, additionally,
we assume that $\kappa>1+\gamma >1$ 
then there exists a constant $C>0$ depending only on $\kappa$ and $\gamma$, such that for all $t\geq 0$ we have
\begin{equation}
\int_0^\infty \int_0^\infty |f(t,r,u)|^\kappa u^{-2-\gamma}du dr = Ct^{\kappa-\gamma}.\label{e:3.15}
\end{equation}
\end{itemize}
\end{lem}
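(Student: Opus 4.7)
My plan for part (i) is to read everything directly off the integral representation $f(t,r,u)=\int_0^t \ind_{[r-u,r]}(s)ds$. The quantity $f(t,r,u)$ equals the Lebesgue measure of $[0,t]\cap[r-u,r]$, hence it is bounded by the length of each of the two intervals, namely $t$ and $u$, and also by $r$ (since $[r-u,r]\cap[0,\infty)\subseteq[0,r]$ when $r\ge 0$), giving \eqref{f_estimates}. The vanishing statement \eqref{e:3.11a} holds because $r>t+u$ forces $r-u>t$, making $[0,t]$ and $[r-u,r]$ disjoint. Finally, \eqref{e:hT_estimate} follows by the triangle inequality together with the monotonicity $f(Tt_j,r,u)\le f(Tt_n,r,u)$ (visible again from the integral representation, since $t_j\le t_n$).

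For part (ii) the approach is a scaling argument. The key observation is the homogeneity
\[
f(\lambda t,\lambda r,\lambda u)=\lambda\, f(t,r,u),\qquad \lambda>0,
\]
obtained by substituting $s=\lambda\sigma$ in the integral defining $f$. Changing variables $r=t\rho$, $u=t\mu$ in the left-hand side of \eqref{e:3.15} therefore pulls the factor $t^{\kappa}\cdot t^{-2-\gamma}\cdot t^{2}=t^{\kappa-\gamma}$ out of the integral, reducing the claim to showing that the universal constant
\[
C=\int_0^\infty\!\!\int_0^\infty f(1,\rho,\mu)^\kappa\, \mu^{-2-\gamma}\,d\rho\,d\mu
\]
is finite.

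Finiteness of $C$ is then a routine split at $\mu=1$ using the bounds from part (i) together with the support property $f(1,\rho,\mu)=0$ for $\rho>1+\mu$. For $\mu\le 1$, $f(1,\rho,\mu)\le\mu$ on its support $[0,1+\mu]$, so the inner $\rho$-integral is $O(\mu^{\kappa})$; multiplied by $\mu^{-2-\gamma}$ this yields $\int_0^1 \mu^{\kappa-2-\gamma}d\mu$, which is finite precisely because $\kappa>1+\gamma$. For $\mu\ge 1$, $f(1,\rho,\mu)\le 1\wedge\rho$ on $[0,1+\mu]$, so the inner integral is at most $\int_0^1 \rho^\kappa d\rho+\mu$, and integrating against $\mu^{-2-\gamma}$ gives an $O(\mu^{-1-\gamma})$ tail, integrable since $\gamma>0$. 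The two hypotheses $\kappa>1+\gamma$ and $\gamma>0$ are each used in exactly one of these endpoints; beyond this there is no real obstacle, and the value of $C$ depends only on $\kappa$ and $\gamma$ as required.
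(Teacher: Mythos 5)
Your proof is correct and follows essentially the same route as the paper: part (i) read off the integral representation of $f$, and part (ii) by the homogeneity $f(\lambda t,\lambda r,\lambda u)=\lambda f(t,r,u)$ reducing to $t=1$, then splitting the $u$-integral at $1$ and using $f\le u$ (needing $\kappa>1+\gamma$) near $0$ and $f\le 1$ (needing $\gamma>0$) at infinity, together with the support restriction $r\le 1+u$. No gaps.
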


\begin{proof}
Part \textit{(i)} is a direct consequence of \eqref{f_def} and \eqref{h_T_def}.

To prove \textit{(ii)}
observe that by \eqref{f_estimates} and \eqref{e:3.11a} for $t=1$ we have 
\begin{eqnarray*}
\int_0^\infty \int_0^\infty\big|f(1,r,u)\big|^\kappa u^{-2-\gamma}drdu & = & \int_0^\infty \int_0^{1+u}\big|f(1,r,u)\big|^\kappa u^{-2-\gamma}drdu \\
& \leq & \int_0^1 \int_0^{1+u} u^\kappa u^{-2-\gamma}drdu \\
&& \: + \int_1^\infty \int_0^{1+u}u^{-2-\gamma}drdu <+\infty
\end{eqnarray*}
since $\kappa>1+\gamma$.
Now, using 
\[f(t,r,u) = tf(1, r/t, u/t),\]
\eqref{e:3.15} follows by a simple substitution.
\end{proof}

\subsection{Proof of Theorem \ref{THM1}}


First observe that by part (ii) of  Lemma \ref{f_lem}, \eqref{e:2.5a} and \eqref{e:2.5b}
it follows that the   process $Y$ given by \eqref{new_proc_def} is well defined.

We will  show convergence of finite-dimensional distributions and then establish tightness on any interval $[0,\tau]$, $\tau>0$, which suffices to obtain the desired convergence (see Thm. 8.1 in \cite{BILL})

\medskip

\textbf{Step 1. Convergence of finite dimensional distributions}

Fix any $a_1,\ldots, a_n \in \mathbb{R}$ and $0\le t_1\le \ldots \le t_n $ and recall the notation \eqref{e:I_T} and \eqref{h_T_def}.
Let us also denote
\begin{equation}\label{h_def}
h(r, u) =\sum_{j=1}^n a_j  f(t_j,r,u)= \sum_{j=1}^n a_j  \left(r_+ \wedge t_j - (r-u)_+ \wedge t_j\right), \quad r,u\ge 0.
\end{equation}
Using \eqref{Y_T_char_func}, \eqref{e:I_T} and \eqref{e:2.5b},
to prove convergence of finite-dimensional distributions, we  only have to show 
that
\begin{equation}\label{fdd_conv}
\lim_{T\rightarrow\infty}I(T) = K^\alpha \int_{\R_+^2} \left|h(r,x)\right|^\alpha u^{-2-\gamma}drdu,
\end{equation}
for some finite positive constant $K$. 

By \eqref{e:I_T}, \eqref{h_T_def}, \eqref{h_def} and recalling the definition of $F_T$ \eqref{thm1_norming} we have
\begin{align}
  I(T)=&\int_{\R_+^2} T^2 \psi\left(\frac{T}{F_T}h(r,u)\right)\abs{g'(Tu)}drdu\label{e:3.18a} \\
  =&\int_{\R_+^2}\left(\frac T{F_T} \right)^{-\alpha}\psi\left(\frac{T}{F_T}h(r,u)\right) T^{2+\gamma}\abs{g(Tu)}dr dt.\label{e:3.18b}
\end{align}
By \eqref{ASSUM_G_deriv} and \eqref{ASSUM_A} we see that the integrand converges pointwise to the integrand on the right hand side of \eqref{fdd_conv}. Therefore, to prove \eqref{fdd_conv} it remains to justify the passage to the limit under the integral.

We will use the decomposition \eqref{e:3.5a}, which corresponds to $\psi=\psi_1+\psi_2$, where $\psi_1$ and $\psi_2$ are given by \eqref{e:3.5b} and \eqref{psi_decomposition}, respectively. We write
\begin{equation}
 I(T)=I_1(T)+I_2(T),
 \label{e:3.19}
\end{equation}
where $I_1(T)$ and $I_2(T)$ are defined by \eqref{e:I_T} with $\psi$ replaced by $\psi_1$ and $\psi_2$, respectively.

We will show that
\begin{align}
 \lim_{T\to\infty}I_1(T)=& K^\alpha \int_{\R_+^2} \left|h(r,x)\right|^\alpha u^{-2-\gamma}drdu,\label{e:3.21}\\
 \lim_{T\to \infty}I_2(T)=&0.\label{e:3.22}
\end{align}
This will imply
\begin{equation}
 Y_{T,1}\fdd KY\qquad \textrm{and} \qquad Y_{T,2}\fdd 0.
\label{e:fdd_Y_T_i}
 \end{equation}
 As the limit of $Y_{T,2}$ is deterministic, $Y_{T,2}(t)$ converges to $0$ in probability for any $t>0$, hence \eqref{e:fdd_Y_T_i} implies the desired convergence of finite-dimensional distributions of $Y_T$.
 
Observe, that by the estimate $1-\cos(\theta x)\le (\theta x)^2$, \eqref{e:3.5b} and \eqref{ASSUM_A} we have
\begin{equation}
 0\le \psi_1(x)\le C(\abs{x}^\alpha\wedge \abs{x}^2)\le C\abs{x}^\alpha.
 \label{e:3.23}
\end{equation}
We may assume that $\kappa$ in the assumptions of the Theorem satisfies $1+\gamma<\kappa<\alpha$, since if \eqref{thm1_cond} holds for some $\kappa$, then it also holds for smaller $\kappa$. In particular, $\kappa<2$. Then, using $(1-\cos(x\theta))\le 2 \abs{\theta x}^\kappa$  \eqref{psi_decomposition} and \eqref{e:nu} we have
\begin{equation}
 \psi_2(x)\le C\left(\abs{x}^\kappa\wedge 1\right)\le C\abs{x}^\kappa.
\label{e:3.19d}
 \end{equation}
Since $\psi_2$ is bounded and $\alpha>1+\gamma>0$ we have
\begin{align}
 \lim_{\abs{x}\to \infty}\frac{\psi_1(x)}{\abs{x}^\alpha}=&\lim_{\abs{x}\to \infty}\frac{\psi(x)}{\abs x^\alpha}=C_\psi, \label{e:3.25a}\\
 \lim_{\abs{x}\to \infty}\frac{\psi_2(x)}{\abs{x}^\alpha}=&0.\label{e:3.25b}
\end{align}
Moreover, by Assumption (${\textbf{G}}$) there exists $D>0$ such that 
\begin{equation}
 \label{e:3.19a}
 \sup_{u\ge D}\abs{g'(u)}u^{2+\gamma}\le 2C_g,
\end{equation}
and we may therefore write
\begin{equation}
 I_i(T)=A_i(T)+ B_i(T), \qquad i=1,2,
\end{equation}
where
\begin{align}
A_i(T) =&\int_0^{D/T} \int_0^\infty \Big(\frac{T}{F_T}\Big)^{-\alpha}\psi_i \Big(\frac{T}{F_T}h(r,u)\Big) T^{2+\gamma}|g'(Tu)|dr  du\quad i=1,2
\label{1A2def}\\
B_i(T) =& \int_0^\infty \int_0^\infty \ind_{(\frac DT,\infty)}(u) \Big(\frac{T}{F_T}\Big)^{-\alpha} \psi_i \Big(\frac{T}{F_T}h(r,u)\Big) T^{2+\gamma}|g'(Tu)|dr du.\quad i=1,2\label{e:3.19c}.
\end{align}
Let us consider $A_1(T)$ first. By \eqref{e:3.23} 
we have
\begin{equation*}
 A_1(T)\le C\int_0^{\frac DT}\int_0^\infty \abs{h(r,u)}^{\alpha} T^{2+\gamma}\abs{g'(Tu)}drdu.
\end{equation*}
Then for $T>1$ by \eqref{h_def}, \eqref{f_def} and Lemma \ref{f_lem} (i) we obtain
\begin{align*}
 A_1(T)\le &C_1 \int_0^{\frac DT}\int_0^{t_n+\frac DT}u^{\alpha} T^{2+\gamma}\abs{g'(Tu)}du\\
=&C_1(t_n+D) T^{1+\gamma-\alpha}\int_0^D u^{\alpha}\abs{g'(u)}du\\
\le& C_1(t_n+D) D^\alpha g(0) T^{1+\gamma-\alpha}\ \to 0.
 \end{align*}
Similarly, using $\kappa<\alpha$,  $(\frac{T}{F_T})^{-\alpha}\le (\frac{T}{F_T})^{-\kappa}$ for $T\ge 1$ and \eqref{e:3.19d} we have
\begin{equation*}
 A_2(T)\le C\int_0^{\frac DT}\int_0^\infty \abs{h(r,u)}^{\kappa} T^{2+\gamma}\abs{g'(Tu)}drdu,
\end{equation*}
and the same argument as above shows that $A_2(T)\to 0$.

Now let us proceed to $B_1(T)$. By \eqref{e:3.25a} and Assumption (\textbf{G}) the integrand in \eqref{e:3.19c} with $i=1$ converges to $C_\psi C_g \abs{h(r,x)}^\alpha u^{-2-\gamma}$. Moreover, by  \eqref{e:3.23} and \eqref{e:3.19a}, it is bounded by
\begin{equation*}
 C_2\abs{h(r,u)}^\alpha u^{-2-\gamma}.
\end{equation*}
By \eqref{h_def} , \eqref{f_def}  part (ii) of Lemma \ref{f_lem} and the fact that $\alpha> 1+\gamma$ the latter function is integrable on $\R_+^2$, hence we can pass to the limit under the integral sign, and \eqref{e:3.21} follows.

It remains to consider $B_2(T)$. 
Using \eqref{e:3.19d} and \eqref{e:3.19a}, $1+\gamma<\kappa<\alpha$, and again Lemma \ref{f_lem} (ii) for $T\ge 1$ we have
\begin{equation*}
 B_2(T)\le C \left(\frac T{F_T}\right)^{\kappa-\alpha}\int_{\R_+^2}\abs{h(r,u)}^\kappa u^{-2-\gamma}drdu\le C \left(\frac T{F_T}\right)^{\kappa-\alpha}\ \to 0.
\end{equation*}
This finishes the proof of \eqref{e:3.22}.
We have proved \eqref{e:fdd_Y_T_i}.

\bigskip
\textbf{Step 2. Tightness.}

Now we continue to establish tightness in $\mathcal{C}([0,\tau])$ for any $\tau>0$.

Let us consider the sequence $(Y_{T,2})$ first. We are going to use Theorem 12.3 in \cite{BILL}. Without loss of generality we may assume that $\alpha > \kappa>1+\gamma$ and $T\ge 1$. Since for each $T\geq 1$ the process $Y_{T,2}$ has stationary increments, one only has to show that there exist $C>0$,  $\beta \geq 0$, $\epsilon>0$  such that
\begin{equation}\label{tightness_condition}
\mathbb{P}(|Y_{T,2}(t)|\geq \lambda)\leq \frac{C}{\lambda^\beta}t^{1+\epsilon}, \quad T\geq 1, t\ge 0, \lambda>0.
\end{equation}
 We will use the following estimate, valid for any real valued random variable $\xi$
\begin{equation} \mathbb{P}(|\xi|>\lambda)\leq\lambda\int_{-2/\lambda}^{2/\lambda}\left(1-\exp(i\theta \xi)\right)d\theta, \quad \lambda>0. 
 \label{e:3.28c}
\end{equation}
By  \eqref{Y_T_char_func}, recalling \eqref{f_def} we have
\begin{equation}
\mathbb{E}\exp(i\theta Y_{T,2}(t))=\exp \left(-\int_0^\infty \int_0^\infty T^2\psi_2\left(\frac{\theta T}{F_T}f(t,r,u)\right)|g'(Tu)|drdu\right).
\end{equation}
Hence, using \eqref{e:3.19d}, \eqref{thm1_norming},
the simple inequality $1-e^{-x}\le x$ and the fact that for $T\ge 1$ we have $(T/F_T)^{\kappa}\le (T/F_T)^{\alpha}=T^\gamma$ it follows that 
\begin{align}
 1 - \mathbb{E}\exp(i\theta Y_{T,2}(t)) 
 &\leq  C\int_0^\infty \int_0^\infty T^2 \Big|\frac{\theta T}{F_T}f(t,r,u)\Big|^\kappa|g'(Tu)|drdu \notag\\
& = C  |\theta|^\kappa\int_0^\infty \int_0^\infty\big|f(t,r,u)\big|^\kappa T^{2+\gamma}|g'(Tu)|drdu \nonumber \\
 &= C|\theta|^\kappa\big(J_1(T) + J_2(T)\big)\label{pthm1_loc1},
\end{align}
where
\[ J_1(T) = \int_0^1 \int_0^\infty \big|f(t,r,u)\big|^\kappa T^{2+\gamma}|g'(Tu)|drdu\]
and
\[ J_2(T) = \int_1^\infty \int_0^\infty \big|f(t,r,u)\big|^\kappa T^{2+\gamma}|g'(Tu)|drdu.\]
Notice that for $u\in (1,\infty)$ and all $T$ sufficiently large $T^{2+\gamma}|g'(Tu)| \leq C |u|^{-2-\gamma}$ for some finite positive constant $C$. Thus, by Lemma \ref{f_lem} (ii) we have
\begin{equation}
\label{e:3.28a}J_2(T) \leq C_1 t^{\kappa-\gamma} 
\end{equation}
for all $T$ large and some finite constant $C_5$. Now, let $\epsilon>0$ be such that $\kappa>1+\gamma+\epsilon$. By \eqref{f_def} and  \eqref{f_estimates}, and then using $\int_0^\infty \ind_{[r-u,r]}(s)dr=u$ for $s,r>0$, we see that
\begin{align*}
J_1(T) &\leq 
\int_0^1 \int_0^\infty  \Big(\int_0^t \ind_{[r-u, r]}(s)ds\Big)t^\epsilon u^{\kappa-1-\epsilon}T^{2+\gamma}|g'(Tu)|drdu \\
 &=  t^\epsilon\int_0^1 tu u^{\kappa-1-\epsilon}T^{2+\gamma}|g'(Tu)|du \\
& =  t^{1+\epsilon} T^{1+\gamma+\epsilon - \kappa}\int_0^T   u^{\kappa-\epsilon}|g'(u)|du\\
& \le t^{1+\epsilon}\int_0^\infty u^{\kappa -\epsilon}\abs{g'(u)}du.
\end{align*}
Let $D$ be as in \eqref{e:3.19a}, then
\begin{equation}
 J_1(T)\le t^{1+\epsilon}\left(D^{\kappa-\epsilon}\int_0^D\abs{g'(u)}du+2C_g\int_D^\infty u^{\kappa -2-\gamma-\epsilon}du\right)\le C t^{1+\epsilon},
\label{e:3.28b}
 \end{equation}
since the first integral is bounded by $g(0)$, and the second is finite thanks to the choice of $\epsilon$.
Combining \eqref{e:3.28b}, \eqref{e:3.28a}, \eqref{pthm1_loc1} with \eqref{e:3.28c} yields \eqref{tightness_condition} (here $\beta=\kappa$).
for all $t\geq 0$ and all $T$ large enough.
This finishes the proof of tightness of 
$Y_{T,2}$ in $\mathcal{C}([0, \tau])$ 

The proof of tightness $Y_{T,1}$ is similar. We have an analogue of \eqref{pthm1_loc1} with $\alpha$ instead of $\kappa$ and the same argument works. In this case $\epsilon=\alpha-1-\gamma$.

Combined with convergence of finite dimensional distributions this implies convergence of $Y_T$ in $C([0,\tau])$ for any $\tau>0$.
\qed
%
%

\subsection{Proof of Theorem \ref{THM2}}


We will show convergence of finite-dimensional distributions by proving the convergence their characteristic functions.

According to the general scheme, we  fix any  $a_1,\ldots, a_n\in \R$,  $0\le t_1\ldots \le t_n$ and we start with formula \eqref{Y_T_char_func}.  To prove the theorem it suffices to show that for 
$I(T)$ defined by \eqref{e:I_T} and \eqref{h_T_def}
we have
\begin{equation}
 \lim_{T\to \infty}I(T)=K^{1+\gamma}\int_0^\infty \abs{a(r)}^{1+\gamma} dr,
\label{e:3.36a}
 \end{equation}
where
\begin{equation}
 a(r)=\sum_{j=1}^n a_j\ind_{[0,t_j]}(r)
 \label{e:a}
\end{equation}

Recalling the definition of $h_T$ (see \eqref{h_T_def}) and substituting $r'=\frac rT$, $u'=\frac u{F_T}$ and then $s'=\frac {(s-r)} u\frac T {F_T}$  we obtain
\begin{align}
I(T)=&
\int_0^\infty \int_0^\infty TF_T\psi\Big(\frac{T}{F_T} \int_0^\infty a(s)\ind_{[r-\frac{uF_T}{T}, r]}(s)ds\Big)|g'(F_Tu)|drdu\notag
\\
=&\int_0^\infty \int_0^\infty \psi\Big(u \int_{-1}^0 a(r+\frac u T F_Ts)ds\Big)F_T^{2+\gamma }|g'(F_Tu)|drdu,
\label{loc4_1}
\end{align}
where in the last equality we also used $T=F_T^{1+\gamma}$.

By Assumption (\textbf{G}) it is now clear that the integrand in \eqref{loc4_1} converges pointwise to $C_g\psi(ua(r))u^{-2-\gamma}$. Also notice, that making the substitution $u'=ua(r)$ we have
\begin{equation}
 \int_0^\infty \int_0^\infty C_g\psi(ua(r))u^{-2-\gamma}dudr
 =C_g\int_0^\infty \psi(u)u^{-2-\gamma}du\int_0^\infty \abs{a(r)}^{1+\gamma}dr.
 \label{e:3.42}
\end{equation}
The integral with respect to $u$ on the right hand side of \eqref{e:3.42} is finite by \eqref{thm2_cond1} or \eqref{thm2_cond2a}, hence \eqref{e:3.36a} will follow provided we can justify passing to the limit under the integrals.

Now the proof forks into two parts depending on whether we assume (i) or (ii) in the formulation of Theorem \ref{THM2}. 

Consider first the case when (i) is satisfied. Using Assumption (\textbf{G}) choose $D>0$ such that \eqref{e:3.19a} holds.
Suppose  that $T$ is such that $T>1$ and $T>D$. Observing that since the support of $a$ is $[0,t_n]$ and hence the integrand in \eqref{loc4_1} is equal to zero if $r>t_n+u\frac {F_T}T$ we write
\begin{equation}
 I(T)=I_1(T)+I_2(T)+I_3(T),
 \label{e:3.43}
\end{equation}
where 
\begin{align*}
 I_1(T)=&\int_0^{\frac D{F_T}} \int_0^{t_n+u\frac {F_T}{T}}\psi\Big(u \int_{-1}^0 a(r+\frac u T F_Ts)ds\Big)F_T^{2+\gamma }|g'(F_Tu)|drdu,\\
 I_2(T)=&\int_{\frac D{F_T}}^{\frac T{F_T}} \int_0^{t_n+u\frac {F_T}{T}} \psi\Big(u \int_{-1}^0 a(r+\frac u T F_Ts)ds\Big)F_T^{2+\gamma }|g'(F_Tu)|drdu,\\
I_3(T)=&\int_{\frac T{F_T}}^\infty\int_0^{t_n+u\frac {F_T}{T}} \psi\Big(u \int_{-1}^0 a(r+\frac u T F_Ts)ds\Big)F_T^{2+\gamma }|g'(F_Tu)|drdu\\
\end{align*}
By
\eqref{thm2_cond1} we have
\begin{align}
I_1(T) 
&\leq  C\int_0^{D/F_T} \int_0^{t_{n}+\frac{uF_T}{T}}\Big|u \norm{a}_\infty\Big|^\kappa F_T^{2+\gamma} |g'(F_Tu)|drdu \notag \\
& \leq  C_1 (t_n+D) \int_0^{D/F_T}u^{\kappa} F_T^{2+\gamma}|g'(F_Tu)|drdu \notag \\
= & C_1 (t_n+D)F_T^{1+\gamma-\kappa}\int_0^{D} u^{\kappa}\abs{g'(u)}du\notag\\
 \leq & C_1 (t_n+D)D^{\kappa} g(0) {F_T^{1+\gamma -\kappa}} \to 0,
 \label{loc4_3}
\end{align}
since we have assumed that $\kappa>1+\gamma$.

%
%
%

Now we consider $I_2(T)$. The integrand converges pointwise to  $\psi(ua(r))u^{-2-\gamma}$. Moreover, by assumption \eqref{thm2_cond1} and the fact that the support of $a$ is $[0,t_n]$, for $ D/{F_T}\le u\le T/F_T$ we have
\begin{equation}
 \psi\Big(u \int_{-1}^0 a(r+\frac u T F_Ts)ds\Big)F_T^{2+\gamma }|g'(F_Tu)|
 \le C\ind_{[0,t_n+1]}(r) (u^\kappa \wedge u^\alpha)u^{-2-\gamma}.
\label{e:3.44a}
 \end{equation}
The latter function is integrable on $\R_+^2$. Hence, using also \eqref{e:3.42} we see that
\begin{equation}
 \lim_{T\to \infty}I_2(T)=K^{1+\gamma}\int_0^\infty \abs{a(r)}^{1+\gamma} dr.
\label{e:3.45}
 \end{equation}
 
Now we proceed to $I_3(T)$. Observe that since $\abs{a(s)}\le \norm{a}_\infty\ind_{[0,t_n]}(s)$ we have
\begin{equation}
\abs{ u \int_{-1}^0 a(r+\frac u T F_Ts)ds} \le \int_\R\abs{a(s)}ds\frac T{F_T} \le \norm{a}_\infty t_n\frac T{F_T}.
 \label{e:3.46a}
\end{equation}
Thus, using
\eqref{thm2_cond1} we can estimate
\begin{align}
I_3(T) & \leq  C\int_{T/F_T}^\infty (t_n+u\frac {F_T}T) \left(\frac {T}{F_T}\right)^\alpha u^{-2-\gamma} du \notag\\
& = \left(\frac T{F_T}\right)^{\alpha-1-\gamma}\int_1^\infty (t_n+u)u^{-2-\gamma}du\to 0.\label{e:3.46}
\end{align}
by asumption $\alpha<1+\gamma$ and the form of $F_T$.

From \eqref{e:3.43}, \eqref{loc4_3},  \eqref{e:3.45} and \eqref{e:3.46} we obtain 
\eqref{e:3.36a} in case (i) which completes the proof of convergence of finite dimensional distributions in this case. 

\medskip

Now consider the case  (ii) in the formulation of Theorem \ref{THM2} is satisfied.  We again have \eqref{loc4_1} and \eqref{e:3.43}.
Now for $I_1+I_2$ we can proceed in a similar way as for $I_2$ in case (i). The only difference is that instead of  \eqref{e:3.44a}  for $0\le u\le \frac T{F_T}$, we use 

\begin{equation*}
 \psi\Big(u \int_{-1}^0 a(r+\frac u T F_Ts)ds\Big)F_T^{2+\gamma }|g'(F_Tu)|
 \le C\ind_{[0,t_n+1]}(r) \psi (\norm a_\infty u)u^{-2-\gamma},
 \end{equation*}
 since we now assume that $\psi$ is nondecreasing on $\R_+$. Similarly as above we obtain that $I_1(T)+I_2(T)$ converge, as $T\to \infty$, to the right hand side of \eqref{e:3.36a}
 
 For $I_3$ we again use \eqref{e:3.46a} and monotonicity of $\psi$ on $\R_+$ obtaining
 \begin{align*}
  I_3(T)& \le  C\int_{T/F_T}^\infty (t_n+u\frac {F_T}T) \psi\left(\frac {T}{F_T}\norm a_\infty t_n\right) u^{-2-\gamma} du  \notag\\
 &= \left(\frac T{F_T}\right)^{-1-\gamma}\psi(\frac T{F_T}\norm a_\infty t_n)\int_1^\infty (t_n+u)u^{-2-\gamma}du.
 \end{align*}
It now suffices to notice that $T^{-1-\gamma}\psi(T)$ converges to $0$ as $T\to \infty$, since by the fact that $\psi$ is nondecreasing
\begin{equation*}
 \frac 1{1+\gamma }\psi(T)T^{-1-\gamma}= \int_T^\infty \psi(T) x^{-2-\gamma}dx\le  \int_T^\infty \psi(x)x^{-2-\gamma}dx.
\end{equation*}
The last integral converges to $0$ by \eqref{thm2_cond2a}.
This proves that $I_3(T)$ converges to $0$. The proof in case (ii) is complete. \qed

\subsection{Proof of Theorem \ref{THM3}}


We use the decomposition \eqref{e:3.5a}. Using the estimate $1-\cos(\theta x)\le (\theta x)^2$, \eqref{e:3.5b} and \eqref{e:nu} we have  $\psi_1(x)\le C {x^2}$. This together with the assumption \eqref{e:2.18a} implies 
\begin{equation*}
 \psi_1(x)\le C\abs{x}^2\wedge \abs{x}^\kappa.
\end{equation*}
The assumptions of   Theorem  \ref{THM2}, in which we take  $\bar \alpha=\kappa$ and $\bar \kappa =2$, are satisfied for $\psi_1$ and the process
$( {F_T}/{T^{1+\gamma}})Y_{T,1}$
converges in the sense of finite dimensional distributions. $F_T=T^\frac 1\alpha$ with $\alpha<1+\gamma$ hence the above implies that
\begin{equation*}
 Y_{T,1}\fdd 0.
\end{equation*}
And therefore also $Y_{T,1}(t)$ converges to $0$ in probability for any $t\ge 0$.

From now on we may therefore assume that
%
$\nu(\{|x|\leq 1\})=0$ and $\psi=\psi_2$. In what follows we omit the index $2$.
Observe that in this case $\psi$ is bounded since $\nu$ is finite (cf. {\eqref{e:nu}}) and from assumption \eqref{e:2.16a} it follows that 
\begin{equation}
 \psi(x)\le C(\abs{x}^\alpha\wedge 1).
 \label{e:3.50}
\end{equation}

Take any $a_1,\ldots,a_n \in \mathbb{R}$ and $0\le t_1\le \ldots\le t_n$ in $\mathbb{R}_+$. According to the general scheme (cf. \eqref{Y_T_char_func}) we need to show that for $I(T)$  given by \eqref{e:I_T}  and $a$ by \eqref{e:a} we have 
\begin{equation}
 \lim_{T\to \infty} I(T)=K^{\alpha}\int_0^\infty \abs{a(r)}^{\alpha}dr.
 \label{e:3.51}
\end{equation}

Using \eqref{e:I_T}, \eqref{h_T_def}, \eqref{f_def}, \eqref{e:3.11a} and substituting $r'=\frac{r-u}{T}$ we rewrite $I(T)$ as
\begin{equation}
 I(T)=I_1(T)+I_2(T),
 \label{e:3.52}
\end{equation}
where
\begin{align}
I_1(T) &=\int_0^\infty \int_{-u/T}^{0}T\psi\Bigg(\frac{1}{F_T}\sum_{j=1}^n a_j f(Tt_j,Tr+u,u)\Bigg)|g'(u)|drdu, \label{integrandDCT}\\
I_2(T) &= \int_0^\infty \int_{0}^{t_n}T\psi\Bigg(\frac{1}{F_T}\sum_{j=1}^n a_j f(Tt_j,Tr+u,u)\Bigg)|g'(u)|drdu. \label{integrandDCT2}
\end{align}
Observe that, by \eqref{f_def},
for $u\ge 0$ and $r\ge 0$ we have
\begin{equation}
 \lim_{T\to \infty} f(Tt_j,Tr+u,u)=\lim_{T\to \infty}\int_0^{Tt_j}\ind_{[Tr,Tr+u]}(s)ds=u\ind_{[0,t_j)}(r),
 \label{e:3.55}
\end{equation}
and
\begin{equation}
 f(Tt_j,Tr+u,u)\le u.
 \label{e:3.56}
\end{equation}

Using \eqref{integrandDCT}, \eqref{e:3.50} and  \eqref{e:3.56} we have 
\begin{equation}
 I_1(T)\le C\int_0^\infty u\left(\left(\frac {u\sum_{j=1}^n\abs{a_j}}{F_T}\right)^\alpha \wedge 1\right) \abs{g'(u)}du\longrightarrow 0,
 \label{e:3.57}
\end{equation}
since the function under the integral converges pointwise to $0$ and is bounded by $u\abs{g'(u)}$, which is integrable by Assumption ({\bf{G}}).

Now we proceed to $I_2(T)$. By \eqref{integrandDCT2}, \eqref{e:3.55}, \eqref{e:2.16a} and \eqref{thm3_norming} we see that
\begin{equation*}
 \lim_{T\to \infty}T\psi\Bigg(\frac{1}{F_T}\sum_{j=1}^n a_j f(Tt_j,Tr+u,u)\Bigg)|g'(u)|=\abs{a(r)}^\alpha \abs{g'(u)}\qquad a.e.
\end{equation*}
and by \eqref{e:3.50}
\begin{equation*}
 T\psi\Bigg(\frac{1}{F_T}\sum_{j=1}^n a_j f(Tt_j,Tr+u,u)\Bigg)|g'(u)| \le C u^\alpha \abs{g'(u)},
\end{equation*}
The function on the right hand side is integrable on $\R_+\times[0,t_n]$. Hence
\begin{equation}\label{e:3.58}
 \lim_{T\to \infty}I_2(T)=\int_0^\infty\int_0^{t_n}\abs{a(r)}^\alpha \abs{g'(u)}drdu
 =g(0)\int_0^\infty \abs{a(r)}^\alpha dr.
\end{equation}
From \eqref{e:3.52}, \eqref{e:3.57} and \eqref{e:3.58} we obtain \eqref{e:3.51}, thus finishing the proof of the theorem. \qed

\subsection{Proof of Theorem \ref{THM_CRITICAL}}

Take any $a_1,\ldots, a_n \in \mathbb{R}$ and $0\le t_1\le \ldots\le  t_n \geq 0$. Recall the general formula for the characteristic function of finite dimensional distributions of $Y_T$ \eqref{Y_T_char_func} and the notation \eqref{h_T_def}, \eqref{f_def}, \eqref{e:I_T} and \eqref{e:a}. 
By Lemma \ref{Y_T_char_lemma}, to prove the desired convergence of finite dimensional distributions it suffices to show
\begin{equation}
 \lim_{T\to \infty }I(T)=K^{\alpha}\int_0^\infty \abs{a(r)}^{\alpha}dr.
 \label{e:3.68}
\end{equation}

Since $\psi(x)=|x|^\alpha$, using \eqref{h_T_def}, \eqref{f_def}  and then substituting $r'=\frac rT$ and $s'=\frac sT$ we may 
 rewrite $I_T$ as
\begin{align}
I(T) &= 
\int_0^\infty \int_0^\infty F_T^{-\alpha}\bigg|\sum_{j=1}^n a_j \int_0^\infty
\ind_{[r-u,r]}(s)\ind_{[0,Tt_j]}(s)ds
\bigg|^\alpha|g'(u)|drdu \notag \\
&= 
\int_0^\infty \int_0^\infty TF_T^{-\alpha}\bigg|T\int_0^\infty a(s)\ind_{[r-\frac{u}{T}, r]}(s)ds\bigg|^\alpha|g'(u)|drdu .\notag \\
\intertext{Now we use the form of $F_T$ and of $g$, then make a change of variables $s'=\frac {(r-s)}u T$, and then, finally, substitute $u'=u/\log T$, obtaining}
I(T)& =  \frac{(1+\gamma)}{\log T} \int_0^\infty \int_0^{\infty} u^\alpha\bigg|\int_0^1 a(r-su/T)ds\bigg|^\alpha (1+u)^{-2-\gamma }drdu \notag \\
& = (1+\gamma) \int_0^\infty \int_0^{\infty}\frac{(u\log T)^\alpha}{(1+u\log T)^{1+\alpha}} \bigg|\int_0^1 a(r-su\log T/T)ds\bigg|^\alpha drdu. \label{thm4_loc2}
\end{align}
Now we write 
\begin{equation}
 I(T)=(1+\gamma)(I_1(T) + I_2(T) + I_3(T)),\label{e:3.70}
\end{equation}
where
\begin{align*}
 I_1(T)& =\int_0^1\int_0^{\infty}\ldots \  drdu,\\
 I_2(T)& =\int_1^{T/\log T}\int_0^{\infty}\ldots \  drdu,\\
 I_3(T)& =\int_{T/\log T}^\infty\int_0^{\infty}\ldots \  drdu,\\
\end{align*}
where $\ldots $ stands for the function under the integral in \eqref{thm4_loc2}.
%
Let us consider first $I_2(T)$. We make a change of variables $u'=\frac {\log u}{\log T}$ obtaining
\begin{multline}
I_2(T)\\
 =  \int_0^{1-\log\log T /\log T} \int_0^\infty\bigg( \frac{T^u\log T}{1+T^u\log T}\bigg)^{1+\alpha} 
 \bigg|\int_0^1 a(r-sT^u\log T/T)ds\bigg|^\alpha dr du,
\end{multline}
Notice that 
$\log\log T/\log T$ goes to zero as $T\rightarrow\infty$. Moreover, we have pointwise convergence to $\abs{a(r)}^\alpha$.
We have
 $|a(r)|\leq C \ind_{[0,t_n]}(r)$ for some finite constant $C$ hence the upper limit in the integral with respect to $r$ can be replaced by $t_n+1$, since for $r>t_n+1$ the function under the integral with respect to $drdu$ vanishes.
We may use the dominated convergence theorem obtaining
\begin{equation} \lim_{T\rightarrow \infty} I_2(T) = \int_0^\infty |a(r)|^\alpha dr.
\label{e:3.71}
\end{equation}

Let us consider $I_1(T)$ next. We have
\begin{align}
I_1(T)& = \int_0^1 \int_0^{t_n+1} \frac{(u\log T)^{\alpha}}{(1+u\log T)^{1+\alpha}
 }
\bigg|\int_0^1 a(r-su\log T/T) ds\bigg|^\alpha dr  du \notag\\
& \leq \norm a_\infty (1+t_n)\int_0^1 \frac{(u\log T)^{\alpha}}{(1+u\log T)^{1+\alpha}} du \notag \\
& =  C \int_0^{\log T} \frac{u^\alpha}{(1+u)^{\alpha+1}}\frac{1}{\log T}du \notag\\
& \leq C_1 \Big(\frac{1}{\log T}\int_0^1 \frac{u^\alpha}{(1+u)^{\alpha+1}} + 
\frac{1}{\log T} \int_1^{\log T} \frac{1}{u}du \Big) \notag\\
& \leq C_2 \Big(\frac{1}{\log T} + \frac{\log\log T}{\log T}\Big)\to 0,
\label{e:3.72}
\end{align}
It remains to show that $I_3(T)$ also converges to $0$ as $T\to \infty$. 
Taking into account that the support of $a$ is $[0,t_n]$, after a change of variables we have
\begin{eqnarray}
I_3(T) 
&=& \frac{1}{\log T}\int_T^\infty \bigg( \int_0^{t_{n}+u/T} \frac{u^\alpha}{(1+u)^{1+\alpha}}\Big|\int_0^\infty a(r-su/T)\ind_{[0,1]}(s)ds\Big|^\alpha dr \bigg)du \nonumber \\
& \leq &   \frac{1}{\log T}\int_T^\infty \bigg( \int_0^{(t_{n}+1) u/T}\frac{T^\alpha}{(1+u)^{1+\alpha}}\Big|\frac{u}{T}\int_0^\infty \ind_{[0, t_{n}]}(r - su/T)ds\Big|^\alpha dr \bigg) du \nonumber \\
& \leq & C\frac{(1+t_n)}{\log T} \int_T^\infty \frac{u}{T}\frac{T^\alpha}{u^{\alpha+1}}du \nonumber \\
& = &  \frac{C_1}{\log T} \int_T^\infty \frac{T^{\alpha-1}}{u^\alpha}du \nonumber\\
&=& C_2 \frac{1}{\log T}\to 0
\label{e:3.73}
\end{eqnarray}
Combining \eqref{e:3.70}-\eqref{e:3.73} shows that  \eqref{e:3.68} is satisfied. This
finishes the proof of the theorem.
\qed

%
%
%

\bibliographystyle{plain}

\bibliography{publications}

\begin{thebibliography}{10}

\bibitem{BN2011}
Ole~E. Barndorff-Nielsen.
\newblock Stationary infinitely divisible processes.
\newblock {\em Brazilian Journal of Probability and Statistics},
  25(3):294--322, 2011.

\bibitem{BNConnor2011}
Ole~E. Barndorff-Nielsen and Andreas Basse-O{\rq}Connor.
\newblock Quasi ornstein--uhlenbeck processes.
\newblock {\em Bernoulli}, 17(3):916--941, 2011.

\bibitem{BILL}
Patrick Billingsley.
\newblock {\em Convergence of Probability Measures}.
\newblock John Wiley \& Sons, first edition, 1968.

\bibitem{BOJTALIV}
Tomasz Bojdecki, Luis Gorostiza, and Anna Talarczyk.
\newblock A long range dependence stable process and an infinite variance
  branching system.
\newblock {\em Ann. Probab.}, 35(2):500--527, 2007.

\bibitem{functlim4}
Tomasz Bojdecki, Luis~G. Gorostiza, and Anna Talarczyk.
\newblock Occupation time fluctuations of an infinite-variance branching system
  in large dimensions.
\newblock {\em Bernoulli}, 13(1):20--39, 2007.

\bibitem{GRAHOVAC}
Nikolai N.~Leonenko {Danijel Grahovac} and Murad~S. Taqqu.
\newblock Intermittency of trawl processes.
\newblock {\em Statistics and Probability Letters}, 137:235--242, 2018.

\bibitem{BN2014}
Neil~Shephard {Ole E. Barndorff-Nielsen}, Asger~Lunde and Almut Veraart.
\newblock Integer‐valued trawl processes: A class of stationary infinitely
  divisible processes.
\newblock {\em Scandinavian Journal of Statistics}, 41(3), 2014.

\bibitem{DOUKHAN}
Silvia R.C.~Lopes {Paul Doukhan}, Adam~Jakubowski and Donatas Surgailis.
\newblock Discrete-time trawl processes.
\newblock {\em Stochastic Processes and their Applications}, 129(4):1326--1348,
  2019.

\bibitem{LRDSS}
Vladas Pipiras and Murad~S. Taqqu.
\newblock {\em Long-Range Dependence and Self-Similarity}.
\newblock Cambridge University Press, 2017.

\bibitem{RR1989}
Balram~S. Rajput and Jan Rosinski.
\newblock Spectral representations of infinitely divisible processes.
\newblock {\em Probability Theory and Related Fields}, 82(3):451--487, 1989.

\bibitem{ST}
Gennady Samorodnitsky and Murad~S. Taqqu.
\newblock {\em Stable Non-Gaussian Random Processes}.
\newblock Stochastic Modelling. CRC Press, first edition, 1994.

\end{thebibliography}

\end{document}